\newtheorem{proposition}{Proposition}
\newtheorem{lemma}{Lemma}
\newtheorem{remark}{Remark}
\newtheorem{definition}{Definition}
\DeclareMathOperator{\Rank}{Rank}
\def\R{\mathbb{R}}
\title{Optimal linear responses for Markov chains and stochastically perturbed dynamical systems}
\author{Fadi Antown, Davor Dragi\v cevi\' c, and Gary Froyland}
\date{5 September 2017}
\begin{document}
\begin{abstract}
The linear response of a dynamical system refers to changes to properties of the system when small external perturbations are applied.
We consider the little-studied question of selecting an optimal perturbation so as to (i) maximise the linear response of the equilibrium distribution of the system, (ii) maximise the linear response of the expectation of a specified observable, and (iii) maximise the linear response of the rate of convergence of the system to the equilibrium distribution.
We also consider the inhomogeneous or time-dependent situation where the governing dynamics is not stationary and one wishes to select a sequence of small perturbations so as to maximise the overall linear response at some terminal time.
We develop the theory for finite-state Markov chains, provide explicit solutions for some illustrative examples, and
 %and in addition to providing closed forms for some special analytic cases, we also
 numerically apply our theory to stochastically perturbed dynamical systems, where the Markov chain is replaced by a matrix representation of an approximate annealed transfer operator for the random dynamical system.
\end{abstract}
\maketitle

%\tableofcontents
\section{Introduction}
The notion of linear response crosses many disciplinary boundaries in mathematics and physics.
At a broad level, one is interested in how various quantities respond to small perturbations in the dynamics.
Historically, this response is often studied through the changes in the equilibrium probability distribution of the system.
In certain cases, if the governing dynamics varies according to a parameter, one can formally express the change in the equilibrium probability distribution as a derivative of the governing dynamics with respect to this parameter.

Finite state Markov chains are one of the simplest settings in which to study formal linear response, and early work includes Schweitzer \cite{schweitzer} who stated response formulae for invariant probability distributions under perturbations of the governing $n\times n$ stochastic matrix $P$.
The perturbations in \cite{schweitzer} were either macroscopic or infinitesimal, and in the latter case the response was expressed as a derivative.
Linear response has been heavily studied in the context of smooth or piecewise smooth dynamical systems.
In the case of uniformly (and some nonuniformly) hyperbolic dynamics, there is a distinguished equilibrium measure, the Sinai-Bowen-Ruelle (SBR) measure, which is exhibited by a positive Lebesgue measure set of initial conditions.
Ruelle \cite{ruellemap} developed response formulae for this SBR measure for uniformly hyperbolic maps;  this was extended to partially hyperbolic maps by Dolgopyat \cite{dolgopyat} and to uniformly hyperbolic flows \cite{ruelleflow,butterley}.
Modern approaches to proving linear response, such as \cite{gouezel06,butterley,gouezel08} do not rely on coding techniques as in \cite{ruellemap}, but work directly with differentiability properties of transfer operators acting on anisotropic Banach spaces.
For expanding and/or one-dimensional dynamics, linear response for unimodal maps \cite{baladismania} and intermittent maps \cite{bahsoun,baladitodd} has been established;  see also the surveys \cite{liverani_notes,baladiicm}.
Linear response results for stochastic systems using Markov (transfer) operator techniques have also been developed \cite{hairermajda} and linear response for inhomogeneous Markov chains have also been considered \cite{mackay}.
There is a great deal of activity concerning the linear (or otherwise) response of the Earth's climate system to external perturbations \cite{abramov_new_2009,chekroun,ragone_new_2015}, and there have been recent extensions to the linear response of multipoint correlations of observables \cite{lucariniwouters}.

Much of the theoretical focus on linear response has been on establishing that for various classes of systems, there \emph{is} a principle of linear response.
Our focus in this work is in a much less studied direction, namely, determining those perturbations that lead to \emph{maximal} response.
This problem of \emph{optimizing} response is of intrinsic mathematical interest, and also has practical implications:  not only is it important to establish the maximal sensitivity of a system to small perturbations, but it is also of great interest to identify those specific perturbations that provoke a maximal system response.
%From an applications point of view, beyond simply computing responses to particular perturbations, it may be crucial to know what the \emph{maximal response} is to perturbations.
For example, a common application of linear response is the response of various models of the Earth's climate to external (man-made) forcing.
Mitigation strategies ought to specifically avoid those perturbations that lead to large and unpredictable responses, and it is therefore important to be able to efficiently identify these maximal response perturbations.
%As far as we know, there has been relatively little (or no) research on this question.
This important avenue of research has relatively few precendents in the literature.
One exception is \cite{wangetal} who consider optimal control of Langevin dynamics; using a linear response approach, they apply a gradient descent algorithm to minimise a specified linear functional.

The questions we ask are:  (i) What is the perturbation that provokes the greatest linear response in the equilibrium distribution of the dynamics? (ii) What is the perturbation that maximally increases the value of a specified linear functional?  (iii) What is the perturbation that has the greatest impact on the rate of convergence of the system to equilbrium?
We answer these questions in the setting of finite state Markov chains, including the inhomogeneous situation.
Question (i) turns out to be the most difficult because of its non-convex nature: we are maximising (not minimising) an $\ell_2$ norm.
We develop an efficient numerical approach, based on solving an eigenproblem, which exploits sparsity of the transition matrix when present.
We are able to solve questions (ii) and (iii) in closed form, following some preliminary computations (solving a linear system and solving an eigenproblem, respectively).

In the numerics section, we apply our results to Ulam discretisations of stochastically perturbed dynamical systems in one dimension.
These Ulam discretisations are large sparse stochastic matrices and thus our previous results readily apply.
We limit ourselves to one-dimensional examples to provide a clearer presentation of the results, but there is no obstacle to carrying out these computations in two- or three-dimensional systems.
The types of dynamical systems that can be considered are of the following forms:
%One important application is the linear response of stochastically perturbed deterministic dynamical systems $T:X\to X$, $X\subset \mathbb{R}^d$.
%For example:
\begin{enumerate}
\item %The stochastic perturbations could be an integral part of the model.
One has deterministic dynamics $T:X\to X$, $X\subset \mathbb{R}^d$ with stochastic perturbations that are an integral part of the model.
There is a background i.i.d.\ stochastic process $\{\xi_n\}_{n=0}^\infty$, with the random variables $\xi:\Omega\to X$ %distributed according to a probability measure $\mathbb{P}$ on $X$.
 creating the perturbed dynamics $x_{n+1}=T(x_n)+\xi_n$, $n\ge 0$.
\item
%The stochastic process may form arise directly from the modelling of a stochastic term, but it
One has a collection of deterministic maps $\{T_{\omega_n}\}_{n=0}^\infty$ which are composed in an i.i.d.\ fashion: $\cdots\circ T_{\omega_k}\cdots\circ T_{\omega_2}\circ T_{\omega_1}$, where $\omega\in\Omega$ is distributed according to a probability measure $\mathbb{P}$ on $\Omega$.
In the special case where $\Omega\subset X\subset \mathbb{R}^d$ and $T_{\omega_i}x=Tx+\omega_i$ for some fixed $T$, this situation coincides with the previous one.
\end{enumerate}
In both cases, one forms an annealed transfer operator $\mathcal{P}f=\int_\Omega \mathcal{P}_{T_{\omega}}\ d\mathbb{P}(\omega)$.
If $\Omega\subset X$ and $\mathbb{P}$ has density $q$ with respect to Lebesgue measure we may write $$\mathcal{P}f(x)=\int_\Omega f(y)q(x-T_{\omega}y)\  d\ell(y).$$
Under mild conditions (see section \ref{sect:numeric}) $\mathcal{P}:L^2(X)\to L^2(X)$ is compact and has a unique fixed point $h$, which can be normalised as $\int_X h(x)\ dx=1$ to form an invariant density of the annealed stochastic dynamics.
One can ask how to alter the stochastic kernel $q$, which governs the stochastically perturbed dynamical system, to achieve maximal linear responses.
%\textbf{This para needs a little more technical work...or else delay technicalities until the numerics section.}

%\textbf{Include only if can come up with suitable discrete time formulation.}
%Alternatively, and increasingly of interest is the situation where the dynamics has multiple time scales, operating on a phase space $X\times Y$, representing the domains of the slow and fast dynamics, respectively.
%For small $\epsilon$, the skew product system
%\begin{eqnarray*}
%x&\mapsto& x+\epsilon S(x,y) \\
%y&\mapsto& y+F(y)
%\end{eqnarray*}
%forces the slow dynamics on $X$ by the fast dynamics on $Y$.
%In homogenisation theory, as the timescale separation increases ($\epsilon\to 0$), one replaces ....
%Thus, if this works out, we can identify perturbations to the invariant density of the driving process that will maximally affect outcomes for the more important slow process (for example, how do small oscillations in fast processes affect the overall climate of the Earth).

The first question we consider is ``how should the new stochastic process be changed in order to produce the greatest linear response to the $L^2$ norm of $h$?''.
Given a small change in the kernel $q$ we obtain a new invariant measure $\mu'$.
Denote $\delta\mu=\mu'-\mu$ and $\delta h$ the density of $\delta\mu$ with respect to Lebesgue;  we wish to select $q$ so as to provoke the greatest change $\delta h$ in an $L^2$ sense.
One motivation for this question is to determine the maximal sensitivity for \textit{all} normalised observables $c\in L^2(X)$.
One has $|\mathbb{E}_{\delta\mu}(c)|\le \|c\|_{L^2}\cdot \|\delta h\|_{L^2}$ and thus $\sup_{\|c\|_{L^2}\le 1} |\mathbb{E}_{\delta\mu}(c)|\le \|\delta h\|_{L^2}$.
In certain situations, if the density $h$ is important in an energy sense, then the $L^2$ norm of the response is important from an energy point of view.
In a recent article \cite{galatolopollicott} consider expanding maps of the interval and determine the perturbation of least (Sobolev-type) norm which produces a \emph{ given linear response}.
In contrast, here we study the question of finding the perturbation that produces the linear response of greatest size.

Second, we consider the problem of maximising linear response of a specific observable $c:X\to \mathbb{R}$ to a change in the stochastic perturbations.
Given a small change in the kernel $q$ we obtain a new invariant measure $\mu'$, and we compare $\mathbb{E}_\mu(c)$ with $\mathbb{E}_{\mu'}(c)$.
How should the new stochastic process be changed in order that the expectation $\mathbb{E}_\mu(c)$ increases at the greatest rate?
Put another way, what is the most ``$c$-sensitive direction'' in the space of stochastic perturbations?

Third, we ask which perturbation of the kernel $q$ produces the greatest change in the rate of convergence to the equilibrium measure of the stochastic process.
This rate of convergence is determined by the magnitude of the second eigenvalue $\lambda_2$ of the transfer operator $\mathcal{P}$ and we determine the perturbation that pushes the eigenvalue farthest from the unit circle.
Related perturbative approaches include \cite{FrSa}, where the mixing rate of (possibly periodically driven) fluid flows was increased by perturbing the advective part of the dynamics and solving a linear program; \cite{FGTW}, where similar kernel perturbation ideas were used to drive a nonequilibrium density toward equilibrium by solving a convex quadratic program with linear constraints;  and \cite{grover_elam} where a governing flow is perturbed deterministically so as to evolve a specified initial density into a specified final density over a fixed time duration, with the perturbation determined as the numerical solution of a convex optimisation problem.
In the current setting, our perturbation acts on the stochastic part of the dynamics and we can find a solution in closed form after some preliminary computations.

An outline of the paper is as follows.
In Section \ref{sect:prelim} we set up the fundamentals of linear response in finite dimensions.
Section \ref{sect:l2} tackles the problem of finding the perturbation that maximises the linear response of the equilibrium measure in an $\ell_2$ sense.
We first treat the easier case where the transition matrix for the Markov chain is positive, before moving to the situation of a general irreducible aperiodic Markov chain.
In both cases we provide sufficient conditions for a unique optimum, and present explicit algorithms, including \verb"MATLAB" code to carry out the necessary computations.
We illustrate these algorithms with two simple analytic examples, which we carry through the paper.
Section \ref{sect:expectation} solves the problem of maximising the linear response of the expection with respect to a particular observable, while section \ref{sect:rate} demonstrates how to find the perturbation that maximises the linear response of the rate of convergence to equilibrium.
In both of these sections, we provide sufficient conditions for a unique optimum, present explicit algorithms, code, and treat two analytic examples.
Section \ref{sect:sequential} considers the linear response problems for a finite sequence of (in general different) stochastic transition matrices.
Section \ref{sect:numeric} applies the theory of Sections \ref{sect:l2}--\ref{sect:rate} to stochastically perturbed one-dimensional chaotic maps.
We develop a numerical scheme to produce finite-rank approximations of the transfer (Perron-Frobenius) operators corresponding to the stochastically perturbed maps.
These finite-rank approximations have a stochastic matrix representation, allowing the preceding theory to be applied.

\section{Notation and setting}\label{sect:prelim}

We follow the notation and initial setup of \cite{lucarini2016response}.
Consider a column stochastic transition matrix $M=(M_{ij})\in\R^{n\times n}$ of a mixing Markov chain on a finite state space $\{1,\dots,n\}$. More precisely, we assume that $M$ satisfies:
\begin{enumerate}
\item $0\leq M_{ij}\leq 1$ for every $i,j \in \{1,\dots,n \}$;
\item $\sum_{i=1}^n M_{ij} = 1$  for every  $j\in \{1,\dots,n\}$;
\item  there exists $N \in \mathbb N$ such that $M^N_{ij}>0$ for every $i,j \in \{1,\dots,n \}$.
\end{enumerate}
Let $\textbf{h}_M= (h_1,\dots,h_n)^\top\in\R^n$ denote the invariant probability vector of $M$, i.e. the probability vector such that $M\textbf{h}_M = \textbf{h}_M$. We note that the
existence and the uniqueness of $\textbf{h}_M$ follow from the above assumptions on $M$.
Moreover,  let us consider  perturbations of $M$ of the form $M+\varepsilon m$, where
$\varepsilon \in \mathbb R$ and $m \in \R^{n\times n}$. In order to ensure that  $M+\varepsilon m$ is also a column stochastic matrix, we need to impose some conditions on $m$ and $\varepsilon$.
For a fixed $m=(m_{ij})\in\R^{n\times n}$, we require that \begin{equation}\label{o} \sum_{i=1}^n m_{ij} = 0 \quad \text{for every $j\in \{1, \ldots, n\}.$}\end{equation}
Furthermore,  we assume that $\varepsilon\in [\varepsilon_-,\varepsilon_+]$ and  $\varepsilon_-<\varepsilon_+$,   where
\[
\varepsilon_+ := \max_{\varepsilon}\{\varepsilon\in\R: M_{ij}+\varepsilon m_{ij}\geq 0\  \text{for every  $i,j \in \{1,\ldots,n \}$} \}
\]
and
\[
\varepsilon_- := \min_{\varepsilon}\{\varepsilon\in\R: M_{ij}+\varepsilon m_{ij}\geq 0\ \text{for every $i,j \in \{1,\ldots,n\}$} \}.
\]
Let us denote the invariant probability vector of the perturbed transition matrix $M+\varepsilon m$ by $\textbf{h}_{M+\varepsilon m}$.
We remark that by decreasing $\left[\epsilon_-, \varepsilon_+\right]$ we can ensure that the invariant probability vector $\textbf{h}_{M+\varepsilon m}$ remains unique.
If we write
\begin{equation}\label{series-expansion}
 \textbf{h}_{M+\varepsilon m} = \textbf{h}_M +\sum_{j=1}^{\infty}\varepsilon^j \textbf{u}_j,
\end{equation}
where $\varepsilon\in\R$ is close to $0$, then $\textbf{u}_1$ is defined as the \textit{linear response}
of the invariant probability vector $\textbf{h}_M$ to the perturbation $\varepsilon m$.

By summing the entries of both sides of (\ref{series-expansion})
and comparing $\varepsilon$ orders, we must have that the column sum of the vector $\textbf{u}_1$ is zero. On the other hand, since $\textbf{h}_{M+\varepsilon m}$
is an invariant probability vector of $M+\varepsilon m$, we have that
\begin{equation}\label{Response-Series}
(M+\varepsilon m)\bigg{(}\textbf{h}_M +\sum_{j=1}^{\infty}\varepsilon^j \textbf{u}_j \bigg{)} = \textbf{h}_M +\sum_{j=1}^{\infty}\varepsilon^j \textbf{u}_j.
\end{equation}
By expanding the left-hand side of~\eqref{Response-Series}, we obtain that
\begin{equation*}
(M+\varepsilon m)\bigg{(}\textbf{h}_M +\sum_{j=1}^{\infty}\varepsilon^j \textbf{u}_j \bigg{)} = \textbf{h}_M +\varepsilon (M\textbf{u}_1+m\textbf{h}_M)+O(\varepsilon^2).
\end{equation*}
Hence, it follows from~\eqref{series-expansion} and~\eqref{Response-Series} that the linear response $\textbf{u}_1$ satisfies equations
\begin{equation}\label{Sing-Lin-Sys}
(\text{Id}-M)\textbf{u}_1 = m\textbf{h}_M
\end{equation}
and
\begin{equation}\label{zero-sum-u}
\textbf{1}^\top \textbf{u}_1 = 0,
\end{equation}
where $\textbf{1}^\top = (1,\dots,1)\in\R^n$.
We note that the matrix $\text{Id}-M$ is singular since $1$ is an eigenvalue of $M$ (with the corresponding eigenvector $\textbf{h}_M$).
However, the restriction of  $\text{Id}-M$ to the subspace $\R^n_0$ of $\R^n$ spanned by all other eigenvectors of $M$, is invertible. We note that $\R^n_0$ consists of all
vectors of column sum zero. Indeed, this  follows from the fact that  $\textbf{1}^\top$ is a left eigenvector of $M$
corresponding to eigenvalue 1 and consequently,  it is orthogonal to all right eigenvectors of $M$ except for $\textbf{h}_M$.
Alternatively, by Theorem 2 from \cite{kemeny1981generalization} we can conclude that  the linear system (\ref{Sing-Lin-Sys})-(\ref{zero-sum-u}) has the unique solution given by
\begin{equation}\label{Lin-Resp}
\textbf{u}_1 = Qm\textbf{h}_M,
\end{equation}
where
\begin{equation}\label{Fun-Mat-MC}
Q = \left(\text{Id}-M+\textbf{h}_M\textbf{1}^\top\right)^{-1}.
\end{equation}
The matrix $Q$ is called the \emph{fundamental matrix} of the transition matrix $M$.
We note that the matrix $Q$ is the so-called generalized inverse of $\text{Id}-M$, which means that  it satisfies \[(\text{Id}-M)Q(\text{Id}-M) = \text{Id}-M. \]
We refer to~\cite{hunter1969moments} for details.
In the rest of the paper, we will denote $\textbf{h}_M$ simply by $\textbf{h}$.

\section{Maximizing the Euclidean norm of the linear response of the invariant measure}
\label{sect:l2}
Our aim in this section is to find the perturbation $m$ that will maximise the Euclidean norm of the linear response. We will start by considering the case when $M$ has all positive entries and later we will deal with the general case when $M\in\R^{n\times n}$ is the transition matrix of an arbitrary mixing Markov chain.

%\section{Development of a linear response formula for the invariant measure}
\subsection{The Kronecker Product}
In this subsection, we will briefly introduce the Kronecker product and some of its basic properties. These results will be used to convert some of our optimization problems into simpler, smaller, and more numerically stable forms.
\begin{definition}
Let $A = (\mathbf{a}_1|\dots |\mathbf{a}_n) = (a_{ij})_{ij}$ be an $m\times n$ matrix and $B$ a $p\times q$ matrix. The $mp\times nq$ matrix given by
$$\left(\begin{array}{ccc}
a_{11}B & \dots & a_{1n}B \\
\vdots &  & \vdots \\
a_{m1}B & \dots & a_{mn}B
\end{array}\right)$$
is called the \emph{Kronecker product} of $A$ and $B$ and is denoted by $A\otimes B$. Furthermore, the \emph{vectorization} of $A$ is given by the vector
$$\widehat{A} :=\left(\begin{array}{c}
\mathbf{a}_1 \\
\vdots \\
\mathbf{a}_n
\end{array} \right) \in \R^{mn}.$$
\end{definition}
The following result collects some basic properties of the Kronecker product.
\begin{proposition}[{\cite{laub2005matrix}}]\label{Prop-Kron-Prod}
Let $A,B,C,D$ be $m\times n$, $p\times q$, $n\times n$ and $q\times q$ matrices respectively, and let $\alpha\in\R$. Then, the following identities hold:
\begin{enumerate}[label=(\roman*)]
\item $(A\otimes B)(C\otimes D) = AC\otimes BD$;
\item $\alpha A = \alpha\otimes A = A\otimes\alpha$;
\item $(A\otimes B)^\top = A^\top \otimes B^\top$, where $A^\top$ denotes the transpose of $A$;
\item $\Rank(A\otimes B) = (\Rank(A))\cdot (\Rank(B))$;
\item let $\lambda_1,\dots,\lambda_n$ be the eigenvalues of $C$ and $\mu_1,\dots,\mu_q$ be the eigenvalues of $D$. Then, the $nq$ eigenvalues of $C\otimes D$ are given by $\lambda_i\mu_j$, for $i=1,\dots,n$ and $j=1,\dots,q$. Moreover, if $\textbf{x}_1\dots,\textbf{x}_n$ are linearly independent right eigenvectors of $C$ corresponding to $\lambda_1,\dots,\lambda_n$ and $\textbf{y}_1\dots,\textbf{y}_q$ are linearly independent right eigenvectors of $D$ corresponding to $\mu_1,\dots,\mu_q$, then $\textbf{x}_i\otimes \textbf{y}_j$ are linearly independent right eigenvectors of $C\otimes D$ corresponding to $\lambda_i\mu_j$;
\item for any $n\times p$ matrix $E$, we have  $$\widehat{AEB} = (B^\top\otimes A)\widehat{E}.$$
\end{enumerate}
\end{proposition}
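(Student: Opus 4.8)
The plan is to deduce all six identities from the ``mixed-product'' identity (i), which is the only one requiring a genuine computation. To prove (i), I would write $A\otimes B$ and $C\otimes D$ in block form, with $(i,j)$-block $a_{ij}B$ and $(j,k)$-block $c_{jk}D$ respectively; multiplying blockwise, the $(i,k)$-block of $(A\otimes B)(C\otimes D)$ is $\sum_j a_{ij}B\,c_{jk}D=\big(\sum_j a_{ij}c_{jk}\big)BD=(AC)_{ik}\,BD$, which is exactly the $(i,k)$-block of $AC\otimes BD$.

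Identity (ii) is immediate: regarding the scalar $\alpha$ as a $1\times1$ matrix, both $\alpha\otimes A$ and $A\otimes\alpha$ unfold from the definition to $(\alpha a_{ij})_{ij}=\alpha A$. For (iii), transposing a block matrix interchanges block positions $(i,j)\leftrightarrow(j,i)$ and transposes each block, so the $(j,i)$-block of $(A\otimes B)^\top$ is $(a_{ij}B)^\top=a_{ij}B^\top$, matching the $(j,i)$-block $(A^\top)_{ji}B^\top=a_{ij}B^\top$ of $A^\top\otimes B^\top$.

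For (iv) I would take thin singular value decompositions $A=U_A\Sigma_AV_A^\top$ and $B=U_B\Sigma_BV_B^\top$ and apply (i) (twice) and (iii) to get $A\otimes B=(U_A\otimes U_B)(\Sigma_A\otimes\Sigma_B)(V_A\otimes V_B)^\top$; since $U_A\otimes U_B$ and $V_A\otimes V_B$ have orthonormal columns (by (i) again), $\Rank(A\otimes B)$ equals the number of nonzero diagonal entries of $\Sigma_A\otimes\Sigma_B$, namely $(\Rank A)(\Rank B)$. For (v), if $C\textbf{x}_i=\lambda_i\textbf{x}_i$ and $D\textbf{y}_j=\mu_j\textbf{y}_j$ then (i) gives $(C\otimes D)(\textbf{x}_i\otimes\textbf{y}_j)=(C\textbf{x}_i)\otimes(D\textbf{y}_j)=\lambda_i\mu_j(\textbf{x}_i\otimes\textbf{y}_j)$, so each $\lambda_i\mu_j$ is an eigenvalue with the claimed eigenvector; collecting the $\textbf{x}_i\otimes\textbf{y}_j$ as the columns of $X\otimes Y$ with $X,Y$ invertible and invoking (iv) shows these $nq$ vectors are linearly independent, hence (the ambient dimension being $nq$) they form a full eigenbasis and exhaust the spectrum.

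Finally, for (vi) I would argue column by column: the $k$-th column of $AEB$ is $A(E\textbf{b}_k)=\sum_\ell b_{\ell k}\,A\textbf{e}_\ell$, where $\textbf{b}_k$ and $\textbf{e}_\ell$ denote the $k$-th column of $B$ and the $\ell$-th column of $E$; on the other side, the $k$-th length-$m$ block of $(B^\top\otimes A)\widehat E$ is $\sum_\ell (B^\top)_{k\ell}\,A\textbf{e}_\ell=\sum_\ell b_{\ell k}\,A\textbf{e}_\ell$, so the two stacked vectors coincide. (Alternatively one checks (vi) on rank-one $E=\textbf{e}_p\textbf{e}_q^\top$ and extends by linearity.) Nothing here is deep; the only points requiring care are index bookkeeping — matching the block convention of $\otimes$ against the column-stacking convention of $\widehat{\,\cdot\,}$ in (iii) and (vi) — and routing the linear-independence/counting step of (v) through (iv) rather than attempting it by hand.
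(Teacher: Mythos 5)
The paper itself gives no proof of this proposition --- it is quoted verbatim from Laub's textbook \cite{laub2005matrix} --- so the only question is whether your self-contained argument is sound. Your treatments of (i), (ii), (iii), (iv) and (vi) are correct: the blockwise computation for the mixed-product rule, the SVD route to the rank formula, and the column-by-column check of the vectorization identity are all standard and complete (for (iv), the step ``rank of the product equals rank of the middle factor'' is justified because $U_A\otimes U_B$ has full column rank and $(V_A\otimes V_B)^\top$ has full row rank, which you implicitly use).

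There is, however, a genuine gap in (v). The first assertion --- that the $nq$ eigenvalues of $C\otimes D$, counted with multiplicity, are exactly the products $\lambda_i\mu_j$ --- is stated for arbitrary square $C$ and $D$, with no diagonalizability hypothesis; only the ``Moreover'' clause assumes full sets of independent eigenvectors. Your argument builds the eigenbasis $\mathbf{x}_i\otimes\mathbf{y}_j$ and then ``exhausts the spectrum,'' which presupposes that such bases exist; if $C$ (say) has a nontrivial Jordan block, there are fewer than $n$ independent eigenvectors and the counting step collapses, while the claim itself remains true. The standard repair is Schur triangularization: write $C=Q_1T_1Q_1^*$ and $D=Q_2T_2Q_2^*$ with $Q_1,Q_2$ unitary and $T_1,T_2$ upper triangular; by (i) and (iii), $Q_1\otimes Q_2$ is unitary and $C\otimes D=(Q_1\otimes Q_2)(T_1\otimes T_2)(Q_1\otimes Q_2)^*$, and $T_1\otimes T_2$ is upper triangular with diagonal entries $\lambda_i\mu_j$, which gives the multiset of eigenvalues in general (a density-of-diagonalizable-matrices argument also works). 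Two smaller remarks: since eigenvectors may be complex, the linear independence of the $\mathbf{x}_i\otimes\mathbf{y}_j$ is most cleanly obtained directly from (i), because $X\otimes Y$ is invertible with inverse $X^{-1}\otimes Y^{-1}$, rather than by invoking your real-SVD proof of (iv); and note that in the paper (v) is only ever applied to the symmetric matrices $\mathbf{h}\mathbf{h}^\top$ and $B^\top Q^\top QB$, so your diagonalizable-case argument does cover the use made of the proposition, but not the proposition as stated.
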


\subsection{An alternative formula for the linear response of the invariant measure}

As a  first application of the Kronecker product, we give  an alternative formula for the linear response (\ref{Lin-Resp}). Using Proposition \ref{Prop-Kron-Prod}(vi) and noting that $Qm\textbf{h}$ is an $n\times 1$ vector, we can write
\begin{equation}\label{lin-resp-formula-vectz}
Qm\textbf{h} =\widehat{Qm\textbf{h}} =  \left(\textbf{h}^\top\otimes Q\right)\widehat{m}=W\widehat{m},
\end{equation}
where $W = \textbf{h}^\top\otimes Q$. Note that $\textbf{h}^\top$ is of dimension $1\times n$ and $Q$ is of dimension $n\times n$. Thus, the dimension of $W$ is $n\times n^2$.
We now have two equivalent formulas for the linear response: \eqref{Lin-Resp} in terms of the matrix $m$  and \eqref{lin-resp-formula-vectz} in terms of the vectorization $\widehat{m}$. In sections \ref{Section-Positive-M} and \ref{Section-General-M} of the paper, the formula \eqref{lin-resp-formula-vectz} will be predominately used.

\subsection{Positive transition matrix $M$}\label{Section-Positive-M}
We first suppose that the transition matrix is positive, i.e.  $M_{ij}>0$ for every $i, j\in \{1, \ldots, n\}$ (section \ref{Section-General-M} handles general stochastic $M$). In this subsection, we will find the perturbation $m$ that maximises the Euclidean norm of the linear response. More precisely, we consider the following optimization problem:

\begin{eqnarray}
\label{obj}
\max_{m\in\R^{n\times n}} &&\|Qm\textbf{h}\|_2^2\\
\label{stoch}\mbox{subject to} && m^\top\mathbf{1} =\mathbf{0}\\
\label{norm}&&\|m\|_F^2-1=0,
\end{eqnarray}
where $\|\cdot\|_2$ is the Euclidean norm and $\|\cdot\|_F$ is the Frobenius norm defined by
$$\|A\|^2_F = \sum_i\sum_j |a_{ij}|^2, \quad \text{for  $A=(a_{ij})$.}$$ We note that the constraint~\eqref{stoch} corresponds to the condition~\eqref{o}, while~\eqref{norm} is imposed to ensure the  existence (finiteness) of the solution.
Furthermore,
we observe that a solution to the above optimization problem exists since we are maximising a continuous function on a compact subset of $\R^{n\times n}$.

\subsubsection{Reformulating the problem (\ref{obj})-(\ref{norm}) in vectorized form:}
We begin by reformulating  the problem (\ref{obj})-(\ref{norm}) in order to obtain an equivalent optimization problem over a space of vectors as opposed to a space of matrices.
Using \eqref{lin-resp-formula-vectz}, we can write the objective function in~\eqref{obj} as $\|W\widehat{m}\|_2^2$.
Similarly, we can rewrite the constraint \eqref{stoch} in terms of $\widehat{m}$. More precisely, we have the following auxiliary result. Let $\text{Id}_n$ denote an identity matrix of dimension $n$.

\begin{lemma}\label{lem1}
The constraint (\ref{stoch}) can be written in the form $A\widehat{m}=\bf{0}$, where $A$ is an $n\times n^2$ matrix given by
\begin{equation}\label{equ-constraint}
A = \text{Id}_n\otimes \mathbf{1}^\top.
\end{equation}
\end{lemma}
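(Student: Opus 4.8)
The plan is to unwind both sides of the claimed identity $A\widehat m = \mathbf 0$ and check they encode exactly the $n$ scalar equations in \eqref{stoch}. First I would recall that the constraint \eqref{stoch}, namely $m^\top\mathbf 1 = \mathbf 0$, says that each column of $m$ sums to zero; writing $m = (\mathbf m_1 \mid \dots \mid \mathbf m_n)$ with columns $\mathbf m_j\in\R^n$, this is the system $\mathbf 1^\top \mathbf m_j = 0$ for $j = 1,\dots,n$. The vectorization is $\widehat m = (\mathbf m_1^\top,\dots,\mathbf m_n^\top)^\top \in \R^{n^2}$, the columns stacked one below the other.

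Next I would compute $A\widehat m$ directly from the block structure of the Kronecker product. By the definition of $\otimes$, the matrix $A = \mathrm{Id}_n \otimes \mathbf 1^\top$ is the $n\times n^2$ block matrix whose $(i,j)$ block is $(\mathrm{Id}_n)_{ij}\,\mathbf 1^\top$, i.e. the $1\times n$ row vector $\mathbf 1^\top$ when $i=j$ and the $1\times n$ zero row when $i\neq j$. Multiplying this block-diagonal-looking matrix against the stacked vector $\widehat m$ gives, in its $i$-th entry, $\sum_{j=1}^n (\mathrm{Id}_n)_{ij}\,\mathbf 1^\top \mathbf m_j = \mathbf 1^\top \mathbf m_i$. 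Hence $A\widehat m = (\mathbf 1^\top\mathbf m_1,\dots,\mathbf 1^\top\mathbf m_n)^\top$, and $A\widehat m = \mathbf 0$ is precisely the system $\mathbf 1^\top \mathbf m_j = 0$ for all $j$, which is \eqref{stoch}. The dimension count $A\in\R^{n\times n^2}$ is immediate from $\mathrm{Id}_n$ being $n\times n$ and $\mathbf 1^\top$ being $1\times n$.

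Alternatively — and this is perhaps the cleanest route — I would invoke Proposition \ref{Prop-Kron-Prod}(vi) with the identification $m^\top \mathbf 1 = \mathrm{Id}_n\, m^\top\, \mathbf 1$ is not quite in the right shape, so instead apply it to $\mathbf 1^\top m = \mathbf 1^\top m\,\mathrm{Id}_n$: by (vi), $\widehat{\mathbf 1^\top m\,\mathrm{Id}_n} = (\mathrm{Id}_n^\top \otimes \mathbf 1^\top)\widehat m = (\mathrm{Id}_n \otimes \mathbf 1^\top)\widehat m$, while the left side is the vectorization of the $1\times n$ row vector $\mathbf 1^\top m$, which vanishes iff $m^\top\mathbf 1 = \mathbf 0$. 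Either way the argument is short; there is no real obstacle here, the only thing to be careful about is matching the transpose conventions in \eqref{stoch} (column sums of $m$) with the row/column conventions in the vectorization and in part (vi) of Proposition \ref{Prop-Kron-Prod}, so I would state the column-sum interpretation explicitly at the outset to keep the bookkeeping transparent.
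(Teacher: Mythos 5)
Your proposal is correct, and your second route is exactly the paper's argument: note $\widehat{\mathbf{1}^\top m} = m^\top\mathbf{1}$ and apply Proposition~\ref{Prop-Kron-Prod}(vi) to $\mathbf{1}^\top m\,\text{Id}_n$ to get $m^\top\mathbf{1} = (\text{Id}_n\otimes\mathbf{1}^\top)\widehat{m}$. Your first route, the direct block computation showing the $i$-th entry of $(\text{Id}_n\otimes\mathbf{1}^\top)\widehat{m}$ is $\mathbf{1}^\top\mathbf{m}_i$, is an equally valid elementary verification of the same identity, so there is nothing to fix.
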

\begin{proof}
We have that $\textbf{1}^\top m$ is a $1\times n$ vector and thus $\widehat{\textbf{1}^\top m} =m ^\top\textbf{1}$. Furthermore, using Proposition \ref{Prop-Kron-Prod}(vi) we have that
$$ m^\top\textbf{1} = \widehat{\textbf{1}^\top m} = \widehat{\textbf{1}^\top m \text{Id}_n} = \left(\text{Id}_n\otimes \textbf{1}^\top\right)\widehat{m} = A\widehat{m}. $$
Finally, we note that since $\text{Id}_n$ is an $n\times n$ matrix and $\textbf{1}^\top$ is an $1\times n$ vector, we have that $A$ is an $n\times n^2$ matrix.
\end{proof}
We also observe that $$\|m\|_F^2 = \sum_i\sum_j |m_{ij}|^2 = \|\widehat{m}\|_2^2.$$ Consequently, we can rewrite constraint~\eqref{norm} in terms of the Euclidean norm of the vector $\widehat{m}$.
Our optimization problem~\eqref{obj}-\eqref{norm} is therefore equivalent to the following:
\begin{eqnarray}
\label{obj2}
\max_{\widehat{m}\in\mathbb{R}^{n^2}} &&\|W\widehat{m}\|_2^2\\
\mbox{subject to} && A\widehat{m}=\mathbf{0}\label{stoch-A}\\
&&\|\widehat{m}\|_2^2-1=0\label{2-norm}.
\end{eqnarray}

\subsubsection{Reformulating the problem (\ref{obj2})-(\ref{2-norm}) to remove constraint (\ref{stoch-A}):}\label{Reform-to-final-soln}
Finally, we reformulate the problem (\ref{obj2})-(\ref{2-norm}) in order to solve it as an eigenvalue problem.
Consider the subspace $V$ of $\R^{n^2}$ given by
\begin{equation}\label{subspace-V}
V = \left\lbrace\textbf{x}\in\R^{n^2}:A\textbf{x} = \textbf{0}\right\rbrace.
\end{equation}
 We can write $V$ as
\begin{equation}\label{subspace-V-span}
V = \text{span}\{\textbf{v}_1,\dots,\textbf{v}_{\ell}\},
\end{equation}
where $\textbf{v}_k \in\mathbb{R}^{n^2}$,  $k\in \{1,\ldots,\ell \}$ form a basis of $V$. Note that  $\ell = n^2-n$. Indeed, it follows from Proposition~\ref{Prop-Kron-Prod}(iv)
and~\eqref{equ-constraint} that $\Rank(A) = \Rank(\text{Id}_n)\Rank(\textbf{1}^\top) = n$,
and thus by the rank-nullity theorem we have that $\ell=n^2-n$.

Taking $\widehat{m} \in V$ and writing \begin{equation}\label{E} E = (\textbf{v}_1|\dots |\textbf{v}_{\ell}),\end{equation}  we conclude  that there exists
a unique $\boldsymbol\alpha\in\mathbb{R}^{\ell}$ such that $\widehat{m} = E\boldsymbol\alpha$.
Hence, $\boldsymbol\alpha = E^+\widehat{m}$, where $E^+$ denotes the left inverse of $E$ given by
\[
 E^+:= (E^\top E)^{-1}E^\top.
\]
Note that since
 $E$ has full rank, we have that $E^\top E$ is non-singular (see p.43, \cite{ben2003generalized}) and therefore  $E^+$ is well-defined.
Using the above identities, we obtain that
\begin{equation}\label{Proj-on-Null}
W\widehat{m} = WE\boldsymbol\alpha = WEE^+\widehat{m}.
\end{equation}
Let \begin{equation}\label{eqU} U = WEE^+. \end{equation} Since the only assumption on $\widehat{m}$ was that $\widehat{m}\in V$, the problem (\ref{obj2})-(\ref{2-norm}) is equivalent to the following:
\begin{eqnarray}
\label{obj3}
\max_{\widehat{m}\in\mathbb{R}^{n^2}} &&\|U\widehat{m}\|_2^2\\
\mbox{subject to} &&\|\widehat{m}\|_2^2-1=0\label{norm3}.
\end{eqnarray}
The solution $\widehat{m}^*$ to the problem \eqref{obj3}-\eqref{norm3} is the $\|\cdot\|_2$-normalised eigenvector corresponding to the largest eigenvalue of the $\ell\times\ell$ matrix $U^\top U$ (see p.281, \cite{meyer2000matrix}).

In the particular case when $\{ \textbf{v}_1,\ldots,\textbf{v}_{\ell}\}$ is an orthonormal basis of $V$, we have that $E^\top E = \text{Id}_\ell$ and therefore
\[ \|\widehat{m}\|_2^2 = \boldsymbol\alpha^\top E^\top E\boldsymbol\alpha = \boldsymbol\alpha^\top \boldsymbol\alpha = \|\boldsymbol\alpha\|_2^2.\]
Using ~\eqref{Proj-on-Null}, we conclude that the optimization problem (\ref{obj3})-(\ref{norm3}) further simplifies to
\begin{eqnarray}
\label{obj4}
\max_{\boldsymbol\alpha\in\mathbb{R}^{\ell}} &&\|\widetilde{U}\boldsymbol\alpha\|_2^2\\
\mbox{subject to} &&\label{norm4}\|\boldsymbol\alpha\|_2^2-1=0,
\end{eqnarray}
where \begin{equation}\label{tildeU} \widetilde{U} = WE.\end{equation}
The solution $\boldsymbol\alpha^*$ to \eqref{obj4}-\eqref{norm4} is the eigenvector corresponding to the largest eigenvalue of $\widetilde{U}^\top \widetilde{U}$.
Finally, we note that the relationship between solutions of  \eqref{obj3}-\eqref{norm3} and \eqref{obj4}-\eqref{norm4} is given by \begin{equation}\label{malpha} \widehat{m}^* = E\boldsymbol\alpha^*.\end{equation}

\subsubsection{The optimal solution and optimal objective value}\label{optimal-value-posi-M}
For positive $M$, we can now derive an explicit expression for $E$ and thus obtain an explicit form for the solution of the optimization problem \eqref{obj}-\eqref{norm}.
We will do this by considering the reformulation \eqref{obj4}-\eqref{norm4} of our original problem \eqref{obj}-\eqref{norm}.
Let $V_0$ be the null space of $\textbf{1}^\top$. An orthonormal basis for $V_0$ is the set $\{\textbf{x}_1, \ldots , \textbf{x}_{n-1}\}$, where
\begin{equation}\label{Basis-ortho}
\textbf{x}_{i} = \frac{\widetilde{\textbf{x}}_i}{\|\tilde{\textbf{x}}_i \|_2}, \quad \text{for $i\in \{1, \ldots, n-1 \}$}
\end{equation}
and
\begin{equation}\label{orthog-vectors}
\widetilde{\textbf{x}}_1 = \left(\begin{array}{c}
1 \\
-1 \\
0 \\
\vdots \\
\vdots \\
0 \\
\end{array} \right),\ \widetilde{\textbf{x}}_2 = \left(\begin{array}{c}
1 \\
1 \\
-2 \\
0 \\
\vdots \\
0 \\
\end{array} \right), \dots, \widetilde{\textbf{x}}_{n-1} = \left(\begin{array}{c}
1 \\
\vdots \\
\vdots \\
\vdots \\
1 \\
-(n-1) \\
\end{array} \right).
\end{equation}
Let $B$ be an $n\times (n-1)$ matrix given by  \begin{equation}\label{B} B = (\textbf{x}_1|\dots |\textbf{x}_{n-1}).\end{equation}
Therefore, we can take  \begin{equation}\label{E1} E = \text{Id}_n\otimes B\end{equation} in~\eqref{E}.
Using Proposition \ref{Prop-Kron-Prod}(i), \eqref{lin-resp-formula-vectz} and~\eqref{tildeU},  we have $\widetilde{U}=WE = \textbf{h}^\top\otimes QB$. Hence, it follows from Proposition \ref{Prop-Kron-Prod}(i) and (iii) that
$$\widetilde{U}^\top \widetilde{U} = \textbf{h}\textbf{h}^\top\otimes B^\top Q^\top QB.$$
By Proposition \ref{Prop-Kron-Prod}(v), the eigenvector corresponding to the largest eigenvalue of $\widetilde{U}^\top \widetilde{U}$ is given by $\boldsymbol\alpha^* = \textbf{h}\otimes \textbf{y}$, where $\textbf{y}$ is the eigenvector corresponding
to the largest eigenvalue (which we denote by $\lambda$) of an  $(n-1)\times (n-1)$ matrix $B^\top Q^\top Q B$.
Hence, it follows from~\eqref{malpha} and~\eqref{E1} that \emph{the optimal perturbation is} \begin{equation}\label{hatm} \widehat{m}^* = E\boldsymbol\alpha^* = (\text{Id}_n\otimes B)(\textbf{h}\otimes \textbf{y}) = \textbf{h}\otimes B\textbf{y}.\end{equation}
Note that this expression for $\widehat{m}^*$ is an improvement over computing an eigenvector of the $(n^2-n)\times (n^2-n)$ matrix $\tilde{U}^\top\tilde{U}$ because we only need $\mathbf{y}$, an eigenvector if an $(n-1)\times (n-1)$ matrix.

Taking into account \eqref{norm3}, we must have  $\|\widehat{m}^* \|_2^2 =1$ and thus
$$1 = \widehat{m}^{*\top} \widehat{m}^* = (\textbf{h}^\top \textbf{h}) (\textbf{y}^\top B^\top B\textbf{y}) = \|\textbf{h}\|_2^2\cdot\|\textbf{y}\|_2^2,$$
as $B^\top B=\text{Id}_{n-1}$ (columns of $B$ form an orthonormal basis of $V_0$).
So, $\textbf{y}$ must satisfy \begin{equation}\label{normalization} \|\textbf{y}\|_2^2 = \frac{1}{\|\textbf{h}\|_2^2}.\end{equation}
Finally, using Proposition \ref{Prop-Kron-Prod}(ii), \eqref{lin-resp-formula-vectz} and~\eqref{hatm},
we obtain that  $$W\widehat{m}^* = (\textbf{h}^\top\otimes Q)(\textbf{h}\otimes B\textbf{y}) = \textbf{h}^\top \textbf{h} QB\textbf{y} = \|\textbf{h}\|_2^2 QB\textbf{y},$$
and therefore \emph{the optimal objective value is}
\begin{equation}\label{opt-obj-val-pos-M}
\|W\widehat{m}^*\|_2^2 = \|\textbf{h}\|_2^4\textbf{y}^\top B^\top Q^\top Q B \textbf{y} =\|\textbf{h}\|_2^4\textbf{y}^\top \left(\lambda \textbf{y}\right) =  \lambda\|\textbf{h}\|_2^4 \cdot \|\textbf{y}\|_2^2 = \lambda \|\textbf{h}\|_2^2.
\end{equation}
We impose the normalization condition~\eqref{normalization} for $\textbf{y}$ throughout the paper when dealing with positive $M$. Note that replacing $\widehat{m}$ with $-\widehat{m}^*$ in ~\eqref{opt-obj-val-pos-M} yields the same Euclidean norm of the response. We therefore choose the sign of $\widehat{m}^*$ so that $\|\mathbf{h}_M\|_2<\|\mathbf{h}_{M+\varepsilon m^*}\|_2$ for small $\varepsilon >0$.

In section \ref{Uniqueness-Result} we provide sufficent conditions for the optimal $m^*$ to be independent of the orthonormal basis vectors forming the columns of $B$ (or alternatively the columns of $E$). These conditions will also guarantee uniqueness of the optimal $m^*$ (up to sign).

\subsection{General transition matrix $M$ for mixing Markov chains}\label{Section-General-M}
In the  general setting, when $M$ is a transition matrix of an arbitrary mixing Markov chain, we consider the following optimization problem:
\begin{eqnarray}
\label{obj5}
\max_{m\in\R^{n\times n}} &&\|Qm\textbf{h}\|_2^2\\
\label{stoch2}\mbox{subject to} && m^\top\mathbf{1} =\mathbf{0}\\
\label{norm2}&&\|m\|_F^2-1=0\\
\label{zeros}&& m_{ij} = 0 \text{ if } M_{ij} = 0 \text{ or } 1.
\end{eqnarray}
Constraint \eqref{zeros} is imposed to ensure that if  it is impossible to transition from state $i$ to state $j$ in one step (i.e. $M_{ji} = 0$) or if state $i$ only leads to state $j$ (i.e. $M_{ji}=1$), then the perturbed Markov chain  will also have these properties.
We note that the solution to the optimization problem~\eqref{obj5}-\eqref{zeros} exists since we are again   maximising  a continuous function on  a compact subset  of $\R^{n\times n}$.

\subsubsection{Reformulating the problem (\ref{obj5})-(\ref{zeros}) in vectorized form}
As in the positive $M$ case, we want to find a matrix $A$ so that the constraints \eqref{stoch2} and \eqref{zeros} can be written in terms of $\widehat{m}$
in the linear form~\eqref{stoch-A}. Let
\begin{equation}\label{set} \mathcal{M}:= \{ i: \widehat{M}_i \in  \{0, 1\}\}=\{\gamma_1,\dots,\gamma_j\}\subset\{1,2,\dots,n^2\},\end{equation} where $\widehat{M}$ denotes the vectorization of $M$.
Proceeding as in the proof of Lemma~\ref{lem1}, it is easy to verify that constraints~\eqref{stoch2} and~\eqref{zeros} can be written in the form~\eqref{stoch-A}, where $A$ is a
$k\times n^2$ matrix ($k\geq n$) given by
\begin{equation}\label{A-final}
A = \left(\begin{array}{c}
 \text{Id}_{n}\otimes \mathbf{1}^\top \\
 \mathbf{e}_{\gamma_1}^\top \\
 \vdots \\
 \mathbf{e}_{\gamma_j}^\top
 \end{array} \right),
\end{equation}
where the $\textbf{e}_k$s in~\eqref{A-final}  are the $k$-th standard  basis vectors in $\R^{n^2}$.  As in the positive $M$ case,  the term $\text{Id}_n\otimes \textbf{1}^\top$ in~\eqref{A-final} corresponds to the constraint (\ref{stoch2}), while
all other entries of $A$ are related to constraints ~\eqref{zeros}. We conclude that
 we can  reformulate the optimization problem \eqref{obj5}-\eqref{zeros} in the form  \eqref{obj2}-\eqref{2-norm}  with $A$ given by \eqref{A-final}.

\subsubsection{Explicit construction of the orthonormal basis of the null space of the matrix $A$ in (\ref{A-final})}\label{Explicit-Basis}
Proceeding as in  the positive $M$ case, we want to  simplify the optimization problem (\ref{obj2})-(\ref{2-norm}) by constructing the matrix $E$ as in~\eqref{E},
whose columns form an  orthonormal basis for  the null space of $A$.
We first note that $E$ is   an $n^2\times\ell$ matrix, where $\ell$ is the nullity of $A$. Let us begin by computing $\ell$ explicitly.

\begin{lemma}\label{lemma-singularity}
The nullity of the matrix $A$ in (\ref{A-final}) is $n^2 - (n+n_1)$, where $n$ is the dimension of the square matrix $M$ and $n_1$ is the number of zero entries  in $M$.
\end{lemma}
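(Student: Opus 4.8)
The plan is to describe the null space of $A$ explicitly and then count its dimension column by column. A vector $\widehat m\in\R^{n^2}$ lies in the null space of $A$ precisely when the associated matrix $m$ satisfies (\ref{stoch2}) and (\ref{zeros}); that is, every column of $m$ sums to zero and $m_{ij}=0$ whenever $M_{ij}\in\{0,1\}$. Both conditions are imposed separately on each column of $m$, so under the identification of $\R^{n^2}$ with $\R^n\oplus\cdots\oplus\R^n$ given by the (column-stacking) vectorization, the null space of $A$ is a direct sum $W_1\oplus\cdots\oplus W_n$, where $W_j\subseteq\R^n$ is the set of admissible $j$-th columns. Hence the nullity of $A$ equals $\sum_{j=1}^n\dim W_j$, and it remains to evaluate each $\dim W_j$.

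Next I would compute $\dim W_j$ via a case analysis whose decisive point is a consequence of column stochasticity. If the $j$-th column of $M$ contains an entry equal to $1$, say $M_{i_0j}=1$, then since the column sums to $1$ all of its other entries vanish, so (\ref{zeros}) forces the entire $j$-th column of $m$ to be zero; the zero-sum condition is then automatic, and $\dim W_j=0$. (Such a column contributes exactly $n-1$ zero entries to $M$.) If the $j$-th column of $M$ contains no $1$, let $z_j$ be its number of zero entries; since the column sums to $1$ it is not identically zero, so $n-z_j\ge 1$. Constraint (\ref{zeros}) confines the $j$-th column of $m$ to the $(n-z_j)$-dimensional coordinate subspace spanned by the positions where $M$ is nonzero in that column, and inside this subspace the single zero-sum condition is nontrivial; thus $W_j$ is a hyperplane there and $\dim W_j=n-z_j-1$.

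Finally I would add the contributions. Let $C$ be the set of column indices $j$ for which the $j$-th column of $M$ contains a $1$, and put $n_2=|C|$; by the first case each such column has $z_j=n-1$ zeros, so $n_1=\sum_{j\notin C}z_j+n_2(n-1)$. Then
\begin{align*}
\dim\bigl(\operatorname{null}A\bigr)=\sum_{j\notin C}(n-z_j-1)
&=(n-n_2)(n-1)-\sum_{j\notin C}z_j\\
&=(n-n_2)(n-1)-\bigl(n_1-n_2(n-1)\bigr)\\
&=n(n-1)-n_1=n^2-(n+n_1),
\end{align*}
which is the claimed value.

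The only genuine subtlety I anticipate is the case distinction in the middle step: one must invoke column stochasticity to recognise that a column of $M$ containing a $1$ is completely determined (all its remaining entries are $0$), which is exactly what makes the corresponding zero-sum row of $A$ redundant and explains the gap between the $n+n_1+n_2$ rows of $A$ and its true rank $n+n_1$. The rest is bookkeeping with $z_j$ and $n_1$.
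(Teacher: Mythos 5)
Your proof is correct, and it takes a route that is genuinely dual to the paper's. The paper computes the \emph{rank} of $A$: it shows that, for a column of $M$ with no entry equal to $1$, the column-sum row together with the zero-indicator rows are linearly independent, and that a column containing a $1$ generates $n+1$ rows spanning only an $n$-dimensional space (one more than its $n-1$ zeros), giving $\Rank(A)=n+n_1$ and then the nullity via rank--nullity. You instead compute the \emph{nullity} directly: you observe that the constraints decouple column by column, so the kernel is a direct sum $W_1\oplus\cdots\oplus W_n$, and you evaluate $\dim W_j$ by the same decisive use of column stochasticity (a column containing a $1$ has all other entries zero, so the zero-pattern constraint already forces that column of $m$ to vanish and the sum-to-zero row is redundant), then do the bookkeeping with $z_j$, $n_1$ and $n_2$. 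Both arguments hinge on exactly the same structural fact; the trade-off is that your kernel decomposition anticipates the block structure $E=\mathrm{diag}(B_1,\dots,B_n)$ that the paper only establishes afterwards (Propositions \ref{Prop-Explicit-E} and \ref{Prop-Explicit-B}) and avoids any linear-independence check on rows, whereas the paper's row argument yields the explicit rank statement that it reuses later to get $\Rank(\widetilde{A}_i)=r_i+1$ in \eqref{rank}; your per-column dimension count $\dim W_j=n-z_j-1$ would supply that same information in kernel form. One small point worth making explicit in your middle step: if a column of $M$ has no entry equal to $1$, column stochasticity actually forces $n-z_j\ge 2$, though your argument only needs $n-z_j\ge 1$ and the formula $\dim W_j=n-z_j-1$ is correct in either reading.
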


\begin{proof}
Let \[ Y = \{\textbf{v}=(v_1, \ldots, v_n)\in\R^n:v_i = 1\text{ for some }1\leq i\leq n\}.\]
Assume first that  $M$ doesn't contain any columns that belong to $Y$ and  consider $\textbf{M}_j$, the $j$-th column of $M$.
Note that  the $j$-th row of $A$ is given by
\begin{equation}\label{Row-k-Atop}
(\underbrace{0,\dots,0}_{n(j-1)},\underbrace{1,\dots,1}_{n},\underbrace{0,\dots,0}_{n(n-j)}).
\end{equation}
On the other hand,  for every zero in $\textbf{M}_j$, we have the following row in $A$
\begin{equation}\label{Row-zero}
(\underbrace{0,\dots,0}_{n(j-1)},\underbrace{0,\dots,0,1,0,\dots,0}_{n},\underbrace{0,\dots,0}_{n(n-j)}),
\end{equation}
where $1$ is in a position corresponding to the position of the zero entry  in $\textbf{M}_j$.
Since $\textbf{M}_j \notin Y$, we have that the number of rows of the form~\eqref{Row-zero} in $A$ is at most $n-2$. Therefore,
we obviously have that the set spanned by row~\eqref{Row-k-Atop}
and rows~\eqref{Row-zero} is linearly independent. Moreover, since all other rows of $A$ have only zeros on places where vectors in~\eqref{Row-k-Atop} and~\eqref{Row-zero} have
nonzero entries and since $j$ was arbitrary, we conclude that rows of $A$ are linearly independent and that $\Rank(A)=n+n_1$. This immediately implies that the nullity of $A$ is $n^2 - (n+n_1)$.

The general case when $M$ can have columns  that belong to $Y$ can be treated similarly. Indeed, it is sufficient to note that
each $\textbf{M}_j \in Y$ will generate $n+1$ rows in $A$ (given again by
~\eqref{Row-k-Atop} and~\eqref{Row-zero}) but only form a subspace of dimension $n=1+(n-1)$ and $n-1$ is precisely the number of zero entries in $\textbf{M}_j$.
\end{proof}
For $A$ given by~\eqref{A-final} written in the form  \begin{equation}\label{ai} A = (A_1|\dots|A_n),  \quad \text{where $A_i\in\R^{k\times n}$,} \end{equation}
let $V$ be defined as in~\eqref{subspace-V}.
We will now construct the matrix $E$ as in~\eqref{E} whose columns form an orthonormal basis for $V$.
The first step is provided by the following result, where $\mbox{diag}(B_1,\dots,B_n)$ denotes the block matrix with diagonal blocks $B_1,\ldots,B_n$.
\begin{proposition}\label{Prop-Explicit-E}
The matrix $E$ has the form $E = \text{diag}(B_1,\dots,B_n)$,  where $B_i$ is the matrix whose columns form an   orthonormal basis of the null space of $A_i$ (if this null space is trivial,
we omit block $B_i$).
\end{proposition}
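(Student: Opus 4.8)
The plan is to exploit the block structure of the matrix $A$ in~\eqref{A-final} and show that the linear system $A\mathbf{x}=\mathbf{0}$ defining $V$ decouples into $n$ independent subsystems, one attached to each block column $A_i$ of~\eqref{ai}.

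First I would pin down how the individual rows of $A$ meet the block columns $A_1,\dots,A_n$. Let $\mathbf{x}_i\in\R^n$ denote the $i$-th column of $m$, so that $\widehat{m}=(\mathbf{x}_1^\top,\dots,\mathbf{x}_n^\top)^\top$. The first $n$ rows of $A$, namely $\text{Id}_n\otimes\mathbf{1}^\top$, act by $(\text{Id}_n\otimes\mathbf{1}^\top)\widehat{m}=(\mathbf{1}^\top\mathbf{x}_1,\dots,\mathbf{1}^\top\mathbf{x}_n)^\top$, so the $i$-th such row has nonzero entries only inside block column $i$. Each of the remaining rows $\mathbf{e}_{\gamma_l}^\top$, with $\gamma_l\in\{1,\dots,n^2\}$, picks out a single coordinate of $\widehat{m}$, which belongs to exactly one block, say block $\beta(l)$; hence $\mathbf{e}_{\gamma_l}^\top$ too is supported in block column $\beta(l)$. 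Consequently every row of $A$ is supported in exactly one block column, and I would record the induced partition of the $k$ rows of $A$ into groups $G_1,\dots,G_n$, where $G_i$ consists of the $i$-th row of $\text{Id}_n\otimes\mathbf{1}^\top$ together with all $\mathbf{e}_{\gamma_l}^\top$ having $\beta(l)=i$. Equivalently, after a permutation of rows $A$ becomes $\mbox{diag}(\tilde A_1,\dots,\tilde A_n)$, where $\tilde A_i$ is the submatrix of the block column $A_i$ formed by the rows in $G_i$; the remaining rows of $A_i$ vanish identically, so in particular $\Ker A_i=\Ker\tilde A_i$.

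From this the key step is immediate: for $\widehat{m}=(\mathbf{x}_1^\top,\dots,\mathbf{x}_n^\top)^\top$ one has $A\widehat{m}=\sum_{i=1}^n A_i\mathbf{x}_i$, and since the vector $A_i\mathbf{x}_i$ is supported in the set $G_i$ and the $G_i$ are pairwise disjoint, $A\widehat{m}=\mathbf{0}$ holds if and only if $A_i\mathbf{x}_i=\mathbf{0}$ for every $i$. Therefore $V=\{\widehat{m}:\ \mathbf{x}_i\in\Ker A_i\text{ for all }i\}$ is the internal orthogonal direct sum of the subspaces $V_i:=\{\widehat{m}\in\R^{n^2}:\ \mathbf{x}_i\in\Ker A_i,\ \mathbf{x}_{i'}=\mathbf{0}\text{ for }i'\ne i\}$, since vectors lying in distinct $V_i$'s are supported on disjoint coordinate blocks.

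Finally I would assemble the basis. Let $B_i$ be a matrix whose columns form an orthonormal basis of $\Ker A_i\subseteq\R^n$ (omitting block $i$ when $\Ker A_i=\{\mathbf{0}\}$, which by the computation in the proof of Lemma~\ref{lemma-singularity} occurs precisely when every entry of column $i$ of $M$ equals $0$ or $1$). Then the columns of $E=\mbox{diag}(B_1,\dots,B_n)$ lie in $V$, span it by the direct-sum decomposition above, and are orthonormal: two columns coming from the same block $B_i$ are orthonormal by the choice of $B_i$, while two columns coming from different blocks are supported on disjoint coordinate sets, hence orthogonal. As a consistency check, the column count is $\ell=\sum_{i=1}^n\dim\Ker A_i=n^2-(n+n_1)$, matching Lemma~\ref{lemma-singularity}. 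The only delicate point --- and the nearest thing to an obstacle --- is the bookkeeping in the first step: verifying that each row $\mathbf{e}_{\gamma_l}^\top$ really is confined to a single block column, which is immediate once one fixes the convention that $\widehat{m}$ stacks the columns of $m$ and that $\mathcal{M}$ in~\eqref{set} is read off from $\widehat{M}$ under that same convention.
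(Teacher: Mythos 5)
Your proposal is correct and follows essentially the same route as the paper's proof: decompose $\widehat{m}$ into its $n$ blocks, observe that every row of $A$ is supported in a single block column so that $A\widehat{m}=\mathbf{0}$ decouples into $A_i\mathbf{x}_i=\mathbf{0}$ for each $i$, and then note that the block-diagonal $E$ built from orthonormal bases of the $\Ker A_i$ spans $V$ and has orthonormal columns. Your version merely spells out the row-support bookkeeping and the dimension check against Lemma~\ref{lemma-singularity} more explicitly than the paper does.
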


\begin{proof}
 Take an arbitrary   $\textbf{w}\in\R^{n^2}$ and write it in the form
$$\textbf{w} = \left(\begin{array}{c}
\textbf{w}_1 \\
\vdots \\
\textbf{w}_n
\end{array} \right), \quad \text{where $\textbf{w}_i\in\R^n$ for $1\le i \le n$.}$$
Noting that all entries of $A\textbf{w}$ are of the form $(A_i \textbf{w}_i)_j$ for $j$ such that $j$-th row of $A_i$ is nonzero, we conclude that
   $A\textbf{w}=\textbf{0}$ if and only if  $A_i\textbf{w}_i = \textbf{0}$ for each $i\in \{1,\ldots,n\}$. Moreover, each $\textbf{w}_i$ such that $A_i\textbf{w}_i = \textbf{0}$
 can be written as a linear combination of columns of $B_i$. Therefore, the columns of $E$ span the subspace $V$. The desired conclusion now follows from the simple observation that columns of $E$ form
 an orthonormal set.
\end{proof}
It remains to construct the matrices $B_i$, $1\le i \le n$, explicitly. Let us first introduce some additional notation.
For a matrix $J\in\R^{p_1\times p_2}$ and a set $L=\{l_1,\dots,l_s\}\subset \{1, \ldots, p_1\}$,
we define $J[L]$ to be  the  matrix consisting  of the rows $l_1,\ldots,l_s$ of  $J$. We note that $J[L]$ is an $s\times p_2$ matrix.

Let $A$ be given by~\eqref{A-final} and write it in the form~\eqref{ai}. Note that $A_i$ can be written as
\begin{equation}\label{eq}
A_i = \left( \begin{array}{c}
0_{i_1\times n} \\
\textbf{1}^\top \\
0_{i_2\times n} \\
\text{Id}_n[R_i] \\
0_{i_3\times n}
\end{array}  \right),
\end{equation}
where $R_i:= \{j: M_{ji} \in \{0, 1\} \}$ and
for some  $i_j\in \{0,1,\dots, n^2\}$, $j\in \{1, 2, 3\}$ such that  \[ \sum_{j=1}^3 i_j = k-|R_i|-1;\]
recall $A$ has $k$ rows (see (\ref{A-final})).
It follows from~\eqref{eq} that the null space of $A_i$ is the same as  the null space of the matrix $$ \widetilde{A}_i :=\left( \begin{array}{c}
 \textbf{1}^\top \\
\text{Id}_n[R_i]
\end{array}\right).$$
Let $r_i\in \{0, \ldots, n-1\}$ denote  the number of zeros in the $i$-th column of $M$.  It follows from the arguments in the proof of Lemma~\ref{lemma-singularity} that
\begin{equation}\label{rank}
\Rank (\widetilde{A}_i)= r_i+1.
\end{equation}
In particular, when $r_i=n-1$, the nullity of $\widetilde{A}_i$ is zero.
\begin{proposition}\label{Prop-Explicit-B} Assume that $r_i<n-1$ and let \[\widetilde{B}_i = (\mathbf{x}_1|\dots|\mathbf{x}_{(n-1)-r_i})\in\R^{(n-r_i) \times ((n-1)-r_i)},\]
where $\mathbf{x}_i$ are given by~\eqref{Basis-ortho}. Furthermore,  let $B_i\in\R^{n \times ((n-1)-r_i)}$ be a matrix defined by the conditions:
\begin{equation}\label{Bi} B_i[R_i] = 0_{r_i\times ((n-1)-r_i)} \quad  \text{and} \quad B_i[\{1,\dots,n\}\setminus R_i] = \widetilde{B}_i.\end{equation}
Then, the columns of $B_i$ form an  orthonormal basis  for  the null space of $A_i$.
\end{proposition}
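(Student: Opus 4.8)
The plan is to reduce the assertion to the orthonormal basis of the null space of $\mathbf{1}^\top$ already used in the positive case (Section~\ref{optimal-value-posi-M}), transported through a coordinate-restriction map.

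First I would invoke the reduction recorded just before the statement: $\ker(A_i)=\ker(\widetilde{A}_i)$, since $A_i$ is obtained from $\widetilde{A}_i$ by adjoining zero rows. It therefore suffices to show that the columns of $B_i$ form an orthonormal basis of $\ker(\widetilde{A}_i)$. Unwinding definitions, $\widetilde{A}_i\textbf{w}=\textbf{0}$ holds precisely when $w_j=0$ for every $j\in R_i$ and $\sum_{j=1}^n w_j=0$; in particular every element of $\ker(\widetilde{A}_i)$ is supported on $\{1,\dots,n\}\setminus R_i$, a set of size $n-r_i$. (Here I use that $r_i<n-1$ forces every entry of the $i$-th column of $M$ to be strictly less than $1$, so $R_i$ consists exactly of the $r_i$ positions of the zero entries; this is the observation underlying~\eqref{rank}.)

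Next I would introduce the linear map $\iota\colon\R^{n-r_i}\to\R^n$ that writes a vector into the coordinates indexed by $\{1,\dots,n\}\setminus R_i$ and puts $0$ in the coordinates indexed by $R_i$. Since $\iota$ merely zero-pads and permutes coordinates it is an isometry onto $\{\textbf{w}\in\R^n: w_j=0\text{ for }j\in R_i\}$, and $\mathbf{1}^\top\iota(\textbf{v})=\mathbf{1}^\top\textbf{v}$; hence $\iota$ restricts to an isometric isomorphism from the null space $V_0'$ of $\mathbf{1}^\top$ in $\R^{n-r_i}$ onto $\ker(\widetilde{A}_i)$. By~\eqref{Basis-ortho}--\eqref{orthog-vectors} applied with $n$ replaced by $n-r_i$ --- exactly the fact established for the positive case at the start of Section~\ref{optimal-value-posi-M} --- the vectors $\mathbf{x}_1,\dots,\mathbf{x}_{(n-1)-r_i}$ form an orthonormal basis of $V_0'$, and the count $(n-1)-r_i=(n-r_i)-1$ is indeed $\dim V_0'$.

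Finally, the matrix $B_i$ defined by~\eqref{Bi} is precisely $\iota$ applied columnwise to $\widetilde{B}_i=(\mathbf{x}_1|\dots|\mathbf{x}_{(n-1)-r_i})$. Applying the isometry $\iota$ to the orthonormal basis $\{\mathbf{x}_k\}$ of $V_0'$ therefore produces an orthonormal subset of $\ker(\widetilde{A}_i)=\ker(A_i)$ of cardinality $(n-1)-r_i$; since $\Rank(\widetilde{A}_i)=r_i+1$ by~\eqref{rank} gives $\dim\ker(A_i)=n-(r_i+1)=(n-1)-r_i$, this orthonormal set is a basis, which is the claim. I do not expect a genuine obstacle here: the only points requiring care are the bookkeeping that $|R_i|=r_i$ when $r_i<n-1$, that $\iota$ is simultaneously an isometry and intertwines the constraint $\mathbf{1}^\top(\cdot)=0$, and that the final dimension count matches the nullity coming from~\eqref{rank}.
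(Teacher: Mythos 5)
Your proposal is correct and follows essentially the same route as the paper's proof: reduce to $\widetilde{A}_i$, observe that zero-padding on the coordinates in $R_i$ preserves both orthonormality and the zero-sum constraint, and conclude by matching the count $(n-1)-r_i$ with the nullity coming from~\eqref{rank}. Your packaging via the explicit isometry $\iota$ (and the remark that $r_i<n-1$ rules out entries equal to $1$, so $|R_i|=r_i$) only makes explicit what the paper states informally; no substantive difference.
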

\begin{proof}
As the null spaces of matrices $A_i$ and $\widetilde{A}_i$ coincide, it is sufficient to show that columns of $B_i$ form an orthonormal basis for the null space of $\widetilde{A}_i$.
We first note that the orthonormality of $\mathbf{x}_1, \ldots,\mathbf{x}_{n-1-r_i}$ in $\R^{n-r_i}$ directly implies that the columns of $B_i$ form an orthonormal set in $\R^n$, since the $j$-th
column of $B_i$ is built from $\mathbf{x}_j$ by adding zeroes on appropriate places that are independent of $j$. Furthermore, since $\mathbf{x}_1, \ldots, \mathbf{x}_{n-1-r_i}$ are in the null space
of $\textbf{1}^\top_{n-r_i}$, we have that the columns of $B_i$ belong to the null space of $\textbf{1}^\top_{n}$. Moreover, it follows from the first equality in~\eqref{Bi} that
columns of $B_i$ are also orthogonal to  all other rows of $\widetilde{A}_i$. Consequently, we conclude all columns of $B_i$ lie in the null space of $\widetilde{A}_i$.
Finally, by~\eqref{rank} we have that the nullity of $\widetilde{A}_i$ is $n-r_i-1$ which is the same as the  number of columns of $B_i$ and therefore columns of $B_i$ span the null space of $\widetilde{A}_i$.

\end{proof}

\subsubsection{Solution to the problem (\ref{obj5})-(\ref{zeros})}\label{sect-gen-final-soln}
Now that we have constructed an appropriate $E$ (Proposition \ref{Prop-Explicit-E} gives the form of $E$ and Proposition \ref{Prop-Explicit-B} provides the specific components of $E$),
we can reformulate our problem \eqref{obj2}-\eqref{2-norm} (with the matrix $A$ in \eqref{A-final}),
to obtain the optimization problem \eqref{obj4}-\eqref{norm4} with $\widetilde{U}$ as in~\eqref{tildeU}.
The vectorized solution to \eqref{obj5}-\eqref{zeros} is given by $\widehat{m}^*$ as in~\eqref{malpha},
where $\boldsymbol\alpha^*$ again denotes  the eigenvector corresponding to the largest eigenvalue of the matrix $\widetilde{U}^\top\widetilde{U}$.
Finally, as for the positive $M$ case, we have that both $m^*$ and $-m^*$ yield the same Euclidean norm of the response ~\eqref{obj5}. Hence, we choose the sign of the matrix $m^*$ so that $\|\mathbf{h}_M\|_2<\|\mathbf{h}_{M+\varepsilon m^*}\|_2$ for small $\varepsilon>0$.

\subsubsection{A sufficient condition for a unique optimal solution and independence of the choice of basis of the null space of $A$}\label{Uniqueness-Result}
The following result provides an easily checkable sufficient condition for the uniqueness of the solution $m^*$ (up to sign) to the problems (\ref{obj})-(\ref{norm}) and (\ref{obj5})-(\ref{zeros}). Under this condition, the specific choice of basis for the null space of the constraint matrix $A$ is unimportant, and the $m^*$ computed in Algorithms 1 and 2 in section \ref{sect:Computations} is independent of this basis choice.

\begin{proposition}\label{Prop-Unique}
Suppose that $\widetilde{U}_1 = WE_1$ and $\widetilde{U}_2 = WE_2$, with $E_1\neq E_2$,  and that the columns of $E_1$ and $E_2$ each form an orthonormal basis for the null space of $A$. Let $\boldsymbol\alpha^*_1$ and $\boldsymbol\alpha^*_2$ be the eigenvectors corresponding to the largest eigenvalues $\lambda_1$ and $\lambda_2$ of $\widetilde{U}_1^\top\widetilde{U}_1$ and $\widetilde{U}_2^\top\widetilde{U}_2$, respectively, normalised so that $\|\boldsymbol\alpha^*_1\|_2=\|\boldsymbol\alpha^*_2\|_2=1$. If $\lambda_1$ has multiplicity one then $\lambda_2$ also has multiplicity one and $\widehat{m}^*_1 = E_1 \boldsymbol\alpha^*_1 = \theta E_2 \boldsymbol\alpha^*_2 = \theta\widehat{m}^*_2$, where $\theta\in\{-1,1\}$.
\end{proposition}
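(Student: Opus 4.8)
The plan is to exploit the fact that the two bases $E_1$ and $E_2$ of the null space $V$ of $A$ are related by an orthogonal change of coordinates, and to track how this relation transports eigenvectors of the two Gram matrices. First I would observe that since the columns of $E_1$ and $E_2$ are each orthonormal and span the same subspace $V$, there is an orthogonal matrix $O\in\R^{\ell\times\ell}$ with $E_2 = E_1 O$; indeed $O := E_1^\top E_2$ does the job, using $E_1^\top E_1 = \Id_\ell$ and the fact that every column of $E_2$ lies in $V = \Ima(E_1)$, so $E_1 E_1^\top$ acts as the identity on it. From this I get $\widetilde U_2 = W E_2 = W E_1 O = \widetilde U_1 O$, and hence $\widetilde U_2^\top \widetilde U_2 = O^\top (\widetilde U_1^\top \widetilde U_1) O$. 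So the two Gram matrices are orthogonally similar; in particular they have the same spectrum, so $\lambda_2 = \lambda_1$, and $\lambda_2$ has the same multiplicity as $\lambda_1$ — in particular multiplicity one. This already gives the ``$\lambda_2$ also has multiplicity one'' part of the statement.

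Next I would use the similarity to relate the eigenvectors. If $\widetilde U_1^\top \widetilde U_1 \boldsymbol\alpha_1^* = \lambda_1 \boldsymbol\alpha_1^*$, then $(\widetilde U_2^\top \widetilde U_2)(O^\top \boldsymbol\alpha_1^*) = O^\top (\widetilde U_1^\top \widetilde U_1) O O^\top \boldsymbol\alpha_1^* = \lambda_1 O^\top \boldsymbol\alpha_1^*$, so $O^\top \boldsymbol\alpha_1^*$ is an eigenvector of $\widetilde U_2^\top \widetilde U_2$ for the eigenvalue $\lambda_1 = \lambda_2$. Because this eigenvalue has multiplicity one, its eigenspace is one-dimensional, so $\boldsymbol\alpha_2^*$ and $O^\top \boldsymbol\alpha_1^*$ are scalar multiples of one another; since $O$ is orthogonal, $\|O^\top\boldsymbol\alpha_1^*\|_2 = \|\boldsymbol\alpha_1^*\|_2 = 1 = \|\boldsymbol\alpha_2^*\|_2$, so the scalar has modulus one, i.e. $\boldsymbol\alpha_2^* = \theta O^\top \boldsymbol\alpha_1^*$ with $\theta\in\{-1,1\}$. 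Finally I would push this back up through $E_2$:
\[
\widehat m_2^* = E_2 \boldsymbol\alpha_2^* = \theta E_2 O^\top \boldsymbol\alpha_1^* = \theta E_1 O O^\top \boldsymbol\alpha_1^* = \theta E_1 \boldsymbol\alpha_1^* = \theta \widehat m_1^*,
\]
which is the claimed identity (rearranged, $\widehat m_1^* = \theta \widehat m_2^*$ since $\theta^{-1}=\theta$).

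I do not expect any serious obstacle here; the argument is essentially linear algebra bookkeeping. The one point that needs a little care is the justification that $O = E_1^\top E_2$ is genuinely orthogonal and satisfies $E_1 O = E_2$ — this rests on $E_1 E_1^\top$ being the orthogonal projector onto $V$ and on every column of $E_2$ lying in $V$, both of which follow from the hypothesis that the columns of $E_1$ and $E_2$ are orthonormal bases of $V$. A second minor point is that the statement as written asserts $\widehat m_1^* = \theta \widehat m_2^*$ rather than $\widehat m_2^* = \theta \widehat m_1^*$; since $\theta\in\{-1,1\}$ these are equivalent, so no issue arises. Everything else — similarity of the Gram matrices, preservation of multiplicities, transport of the top eigenvector — is immediate from orthogonal similarity.
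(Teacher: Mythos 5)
Your proposal is correct and follows essentially the same route as the paper: write $E_2=E_1R$ with $R$ orthogonal (you give the explicit choice $R=E_1^\top E_2$ and justify it via the projector $E_1E_1^\top$, which the paper leaves implicit), deduce that $\widetilde{U}_1^\top\widetilde{U}_1$ and $\widetilde{U}_2^\top\widetilde{U}_2$ are orthogonally similar, and use simplicity of the top eigenvalue to transport the leading eigenvector up to a sign. Your extra bookkeeping on the eigenvector identification and the norm-preservation argument only fills in details the paper states tersely; there is no substantive difference.
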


\begin{proof}
Let $E_1, E_2\in\R^{n^2\times\ell}$ be the matrices with columns consisting of orthonormal basis vectors of the null space of $A$ such that $E_1\neq E_2$. As both $E_1$ and $E_2$ span the same space, there exists some matrix $R\in\R^{\ell\times\ell}$ such that $E_2=E_1R$. Noting that $E_i^\top E_i =$ Id$_{\ell}$, $i=1,2$, we have that Id$_{\ell} = E_2^\top E_2 = R^\top E_1^\top E_1 R = R^\top R$; using the fact that $R$ is square, we also have that $R^\top = R^{-1}$ and hence $R$ is orthogonal. As
\begin{equation*}
\widetilde{U}_1^\top\widetilde{U}_1 = E_1^\top W^\top W E_1\text{ and }\widetilde{U}_2^\top\widetilde{U}_2 = R^{-1} E_1^\top W^\top W E_1 R,
\end{equation*}
the matrices $\widetilde{U}_1^\top\widetilde{U}_1$ and $\widetilde{U}_2^\top\widetilde{U}_2$ are similar; thus, $\lambda_2$ has multiplicity one. Combining this with the fact that $\|\boldsymbol\alpha^*_1\|_2=\|\boldsymbol\alpha^*_2\|_2=1$, we finally have that $\boldsymbol\alpha^*_1 = \theta R\boldsymbol\alpha^*_2$  and $$\widehat{m}^*_1 = E_1 \boldsymbol\alpha^*_1 = \theta E_1 R \boldsymbol\alpha^*_2 = \theta E_2 \boldsymbol\alpha^*_2 = \theta\widehat{m}_2^*.$$
\end{proof}

\subsection{Computations}\label{sect:Computations}
In this section, we will discuss computational aspects of the content presented so far.

\subsubsection{Computing the response vector $\mathbf{u}_1$ without forming $Q$}
%Solving (\ref{Sing-Lin-Sys})-(\ref{zero-sum-u}) with $I-M$}
\label{Without-Q}
So far, we  have used (\ref{Lin-Resp}) to represent $\mathbf{u}_1$, which requires computation of $Q$, which itself requires inversion of a possibly large matrix.
%equation $Q$ given by~\eqref{Fun-Mat-MC} to solve linear system (\ref{Sing-Lin-Sys})-(\ref{zero-sum-u}).
We note that we can avoid computing $Q$ explicitly. Indeed, we can find the  linear response $\textbf{u}_1$ as a  unique solution for the  linear system:
\begin{equation*}
\widetilde{M}\textbf{u}_1 = \boldsymbol\kappa,
\end{equation*}
where
\begin{equation}\label{tildeM} \widetilde{M} = \left(\begin{array}{c}
\text{Id}-M \\
\textbf{1}^\top
\end{array}\right)\text{ and }
\boldsymbol\kappa = \left(\begin{array}{c}
m\textbf{h} \\
0
\end{array}\right).\end{equation}
This approach is more numerically stable than directly forming $Q$ by matrix inversion and exploits sparseness of $M$ when present.

\subsubsection{Computing the optimal perturbation $\hat{m}^*$ for positive $M$ without forming $Q$}\label{Pos-M-No-Q}
In section \ref{optimal-value-posi-M}, under the assumption that $M$ is positive,  we showed that the solution to our optimization problem is given by~\eqref{hatm},
 where $\textbf{y}$ is the eigenvector corresponding to the largest eigenvalue of $B^\top Q^\top Q B$ and $B$ is given by~\eqref{B}. We claim that we can find $QB$ without having
 to explicitly compute $Q$.

 Let us first note  that $\textbf{1}^\top(\text{Id} - M +\textbf{h}\textbf{1}^\top) = \textbf{1}^\top$ and thus $\textbf{1}^\top Q = \textbf{1}^\top$. Hence,
 for $\textbf{w}\in\R^n$ such that $\textbf{1}^\top\textbf{w} = 0$, we have that $\textbf{1}^\top Q\textbf{w} = 0$.
 Using this and the fact that each column of $B$ sums to zero, we can can find $QB$ as a solution to the linear system
\begin{equation}\label{X}
\widetilde{M}X = K,
\end{equation}
where
\begin{equation}\label{k}
K = \left(\begin{array}{c}
B \\
\textbf{0}^\top
\end{array}\right)
\end{equation}
and $\widetilde M$ as in~\eqref{tildeM}.
Thus, replacing $B^\top Q^\top QB$ with $X^\top X$, we avoid matrix inversion to form $Q$ and do not deal with matrices of order larger then  $(n+1)\times n$.

We note that it is also possible to compute the matrices $U$ and $\widetilde{U}$ in equations ~\eqref{eqU} and \eqref{tildeU} for a general transition matrix $M$ of a mixing Markov chain without forming $Q$. However, doing this might not be much more efficient than computing $Q$.

\subsubsection{Algorithms for solving (\ref{obj})-(\ref{norm}) and (\ref{obj5})-(\ref{zeros})}\label{Algo}
We first present the algorithm for finding the solution $m^*$ of the problem (\ref{obj})-(\ref{norm}).
\begin{multicols}{2}
\textbf{Algorithm 1}
\begin{enumerate}
\item Compute $\textbf{h}$ as the invariant probability vector of $M$.
\item Construct the matrix $B$ in ~\eqref{B}.
\item Solve the linear equation ~\eqref{X} for the matrix $X$.
\item Compute the singular vector $\mathbf{y}$ corresponding to the largest singular value of $X$.
\item Form the matrix $m^*$ using ~\eqref{hatm} and the normalisation ~\eqref{normalization}.
\end{enumerate}
\columnbreak
\textbf{Matlab Code}
\begin{verbatim}
function [m,h] = lin_resp(M)
n=length(M);

%Step 1
[V,D] = eigs(M,1);
h = V;
h = h/sum(h);

%Step 2
B = triu(ones(n))-diag([1:n-1],-1);
B(:,n) = [];
B = sparse(normc(B));

%Step 3
X = [speye(n)-M;ones(1,n)]\[B;zeros(1,n-1)];

%Step 4
[U2,D2,V2] = svds(X,1);

%Step 5
y = 1/(norm(h)*norm(V2))*V2;
m = B*y*h';
end
\end{verbatim}
\end{multicols}
Note that finally, one needs to select the correct sign of $m$. Next, we state the algorithm for solving (\ref{obj5})-(\ref{zeros}).
\newpage
\begin{multicols}{2}
\textbf{Algorithm 2}
\begin{enumerate}
\item Compute $\textbf{h}$ as the invariant probability vector of $M$.
\item Construct the matrix $B$ in ~\eqref{B} .
\item Compute the matrix $\widetilde{U}$ using results from Propositions \ref{Prop-Explicit-E} and \ref{Prop-Explicit-B}, Lemma \ref{lemma-singularity} and the fact that $\widetilde{U} = Q(\mathbf{h}^\top\otimes \text{Id}_n)E$; where this last expression is derived using Proposition \ref{Prop-Kron-Prod}(i).
\item Compute the singular vector $\boldsymbol\alpha^*$ corresponding to the largest singular value of $\widetilde{U}$.
\item Form the matrix $m^*$ by using the results from Propositions \ref{Prop-Explicit-E} and \ref{Prop-Explicit-B} and the fact that $\widehat{m}^* = E\boldsymbol\alpha^*$.
\end{enumerate}
\columnbreak
\textbf{Matlab Code}
\begin{verbatim}
function [m,h] = lin_resp(M)
n=length(M);

%Step 1
[V,D] = eigs(M,1);
h = V;
h = h/sum(h);

%Step 2
B = triu(ones(n))-diag([1:n-1],-1);
B(:,n) = [];
B = sparse(normc(B));

%Step 3
n1 = length(find(M==0));
U = zeros(n,n^2-(n+n1));
j1 = 1;
j2 = 0;
for i=1:n
    R = find(M(:,i)==0);
    r = length(R);
    if r~= n-1
        B_i = zeros(n,n-r-1);
        R2 = setdiff([1:n],R);
        r2 = length(R2);
        B_i(R2,:) = B(1:r2,1:(r2-1));
        j2 = j2+n-r-1;
        U(:,j1:j2) = h(i)*B_i;
        j1 = j2+1;
    end
end

M_inf = h*ones(1,n);
Q = inv(eye(n)-M+M_inf);
U = Q*U;

%Step 4
[U2,D2,V2] = svds(U,1);

%Step 5
m = sparse(n,n);
j1=1;
j2=0;
for i=1:n
    R = find(M(:,i)==0);
    r = length(R);
    j2=n-r-1+j2;
    if r~= n-1
        B_i = zeros(n,n-r-1);
        R2 = setdiff([1:n],R);
        r2 = length(R2);
        B_i(R2,:) = B(1:r2,1:(r2-1));
        m(:,i) = B_i*V2(j1:j2);
    else
        m(:,i) = sparse(n,1);
    end
    j1=j2+1;
end
end
\end{verbatim}
\end{multicols}
Note that again we must select the correct sign of $m$.
\subsection{Analytic examples}
\subsubsection{Analytic Solution for $M\in\R^{2\times 2}$}\label{example-analytic-2x2}
We will now construct explicitly the solution for the  problem \eqref{obj5}-\eqref{zeros} when $M\in\R^{2\times 2}$. Since $M$ is column stochastic and since columns of $m$ sum to zero, we can write
\begin{equation*}
M = \left(\begin{array}{cc}
M_{11} & M_{12} \\
M_{21} & M_{22}
\end{array}\right) =  \left(\begin{array}{cc}
1-M_{21} & M_{12} \\
M_{21} & 1-M_{12}
\end{array}\right)
\end{equation*}
and
\[
m = \left(\begin{array}{cc}
m_{11} & m_{12} \\
m_{21} & m_{22}
\end{array}\right) =  \left(\begin{array}{cc}
m_{11} & -m_{22} \\
-m_{11} & m_{22}
\end{array}\right).
\]
Furthermore, let  $\textbf{h} = (h_1, h_2)^\top$. We first note that without any loss of generality, we can assume that $M$ is positive.
Indeed, if  $M_{11} = 0$ then by~\eqref{norm2} and~\eqref{zeros}, we have that $m_{11}=0$ and  $m_{22}=\pm \frac{1}{\sqrt{2}}$. Similarly, if $M_{22} = 0$ then $m_{22}=0$ and $m_{11}=\pm \frac{1}{\sqrt{2}}$
%In both cases we are optimizing over a finite set.
Furthermore, we note that $M_{11} \neq 1$ and $M_{22} \neq 1$ since otherwise $M$ would not be a transition matrix of an ergodic
Markov chain.

We therefore assume that $M$ is positive.  We begin by noting that the invariant probability vector for $M$ is given by
\begin{equation}
\label{h2x2eqn}
\textbf{h} = d\left(\begin{array}{c}
M_{12}\\
M_{21}
\end{array}\right),
\end{equation}
where $d = \frac{1}{M_{12}+M_{21}}$.
It follows from~\eqref{hatm} that $\widehat{m}^* = \textbf{h}\otimes B\textbf{y}$ and thus $m^* = B\textbf{y}\textbf{h}^\top$,
where $B$ is given by~\eqref{B} and $\textbf{y}$ is the eigenvector corresponding to the largest eigenvalue of $B^\top Q^\top QB$.
 Observe that in this case
\begin{equation*}
B = \left(\begin{array}{c}
\frac{1}{\sqrt{2}}\\
-\frac{1}{\sqrt{2}}
\end{array}\right).
\end{equation*}
In order to find $\textbf{y}$, we begin by computing $QB$. In section~\ref{Pos-M-No-Q}  we have observed that $QB$ is given by $X$, where $X$ solves
$\widetilde M X=K$ with  $\widetilde M$ and $K$   given by~\eqref{tildeM} and~\eqref{k} respectively. Hence, $X$ solves the system
\begin{equation*}
\left(\begin{array}{cc}
M_{21} & -M_{12} \\
-M_{21} & M_{12} \\
1 & 1
\end{array}  \right)X =
\left(\begin{array}{c}
\frac{1}{\sqrt{2}} \\
-\frac{1}{\sqrt{2}} \\
0
\end{array} \right),
\end{equation*}
and therefore
\begin{equation*}
QB = X = \frac{1}{M_{12}+M_{21}} \left(\begin{array}{c}
\frac{1}{\sqrt{2}} \\
-\frac{1}{\sqrt{2}}
\end{array} \right)=dB.
\end{equation*}
Consequently,  \[ (QB)^\top QB = d^2 B^\top B = d^2, \]
and therefore  $\textbf{y}$ in this case is a scalar.
Taking into account~\eqref{normalization}, we can take  \[\textbf{y} = \pm\frac{ 1}{\|\textbf{h}\|_2}=\pm\frac{1}{d\sqrt{M_{12}^2+M_{21}^2}}, \] which yields
\begin{equation}\label{opt}
m^* = B\textbf{y}\textbf{h}^\top =\pm \frac{1}{\sqrt{2(M_{12}^2+M_{21}^2)}} \left(\begin{array}{cc}
M_{12} & M_{21} \\
-M_{12} &-M_{21}
\end{array} \right).
\end{equation}
At this point, we select the sign of $m^*$ to ensure that $\|\mathbf{h}_{M+\varepsilon m^*}\|_2>\|\mathbf{h}_M\|_2$ for small $\varepsilon>0$.
Inspecting (\ref{h2x2eqn}) we see that if $M_{12}>M_{21}$ we should increase $M_{12}$ and decrease $M_{21}$ to increase $\|\mathbf{h}\|_2$.
Thus,
\begin{eqnarray}
\label{optcase}
m^* = \left\{
        \begin{array}{ll}
          \frac{1}{\sqrt{2(M_{12}^2+M_{21}^2)}} \left(\begin{array}{cc}
M_{12} & M_{21} \\
-M_{12} &-M_{21}
\end{array} \right), & \hbox{if $M_{12}\ge M_{21}$;} \\
          \frac{1}{\sqrt{2(M_{12}^2+M_{21}^2)}} \left(\begin{array}{cc}
-M_{12} & -M_{21} \\
M_{12} &M_{21}
\end{array} \right), & \hbox{if $M_{21}>M_{12}$.}
        \end{array}
      \right.
\end{eqnarray}

Finally, it follows from~\eqref{Lin-Resp} that
\begin{eqnarray*}
\lefteqn{\mathbf{u}_1=Qm^*\textbf{h} = \|\textbf{h}\|_2^2QB\textbf{y} = \|\textbf{h}\|_2^2 dB\frac{1}{\|\textbf{h}\|_2} = \|\textbf{h}\|_2dB}\\
 &=& \left\{
        \begin{array}{ll}
           \frac{\sqrt{M_{12}^2+M_{21}^2}}{(M_{12}+M_{21})^2}\left(\begin{array}{c}
\frac{1}{\sqrt{2}} \\
-\frac{1}{\sqrt{2}}
\end{array} \right) , & \hbox{if $M_{12}\ge M_{21}$;} \\
           \frac{\sqrt{M_{12}^2+M_{21}^2}}{(M_{12}+M_{21})^2}\left(\begin{array}{c}
-\frac{1}{\sqrt{2}} \\
\frac{1}{\sqrt{2}}
\end{array} \right) , & \hbox{if $M_{21}>M_{12}$.}
        \end{array}
      \right.
\end{eqnarray*}
and thus
\begin{equation*}
\|\mathbf{u}_1\|_2^2=\|Qm^*\textbf{h}\|_2^2 = k^2\|\textbf{h}\|_2^2= \frac{M_{12}^2+M_{21}^2}{(M_{12}+M_{21})^4}.
\end{equation*}
The minimum value of this expression occurs when $M_{12}=M_{21}=1$ (value of $1/8$) and increases with decreasing values of $M_{12}$ and $M_{21}$.
There is a singularity at $M_{12}=M_{21}=0$ when the second eigenvalue merges with the eigenvalue 1;  see Figure \ref{fig:contour2x2}.
%It follows from~\eqref{opt} that in the case when $M>0$, we have that the
%optimal perturbation to the transition probabilities in state 1 is a multiple of the value $M_{12}$ (which is the probability that the system leaves state 2).
%Similarly, the optimal perturbation to the transition probabilities in state 2 is a multiple of the probability that the system leaves state 1.
\begin{figure}
  \centering
  \includegraphics[width=10cm]{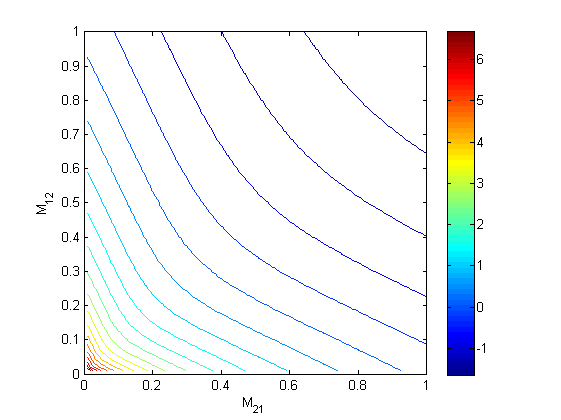}
  \caption{Contour plot of $\log_e((M_{12}^2+M_{21}^2)/(M_{12}+M_{21})^4)$}\label{fig:contour2x2}
\end{figure}

\subsubsection{Analytic Solution for $M = \frac{1}{n} 1_{n\times n}$}
Let us now solve explicitly the problem \eqref{obj5}-\eqref{zeros} when  $M=\frac{1}{n}1_{n\times n}$.
Note  that $\textbf{h} = \frac{1}{n}\textbf{1}_n$.
As in the previous section, it follows from~\eqref{hatm} that $m^* = B\textbf{y}\textbf{h}^\top$, where $B$ is given by~\eqref{B}
and $\textbf{y}$ is the eigenvector corresponding to the largest eigenvalue of $B^\top Q^\top QB$.
Observe that
\begin{equation*}
\begin{aligned}
Q &= (\text{Id}_n-M+\textbf{h}\textbf{1}_n^\top)^{-1}\\
&= \left(\text{Id}_n-\frac{1}{n}1_{n\times n}+\frac{1}{n}\textbf{1}_n\textbf{1}_n^\top\right)^{-1}\\
&= \text{Id}_n^{-1} = \text{Id}_n,
\end{aligned}
\end{equation*}
and thus $QB=B$.
Hence, we have that $B^\top Q^\top QB = B^\top B = \text{Id}_{n-1}$.
Taking into account~\eqref{normalization}, we observe that  \[ \|\textbf{y}\|^2_2 = \frac{1}{\|\textbf{h}\|^2_2}=n.\]
Therefore, we  can take  $\textbf{y} = \sqrt{\frac{n}{n-1}}\textbf{1}_{n-1}$.
Consequently,
\begin{equation*}
m^* = B\textbf{y}\textbf{h}^\top = \frac{1}{\sqrt{n(n-1)}}B1_{(n-1)\times n}
=\frac{1}{\sqrt{n(n-1)}}\sum_{i=1}^{n-1}\mathbf{x}_i\otimes\mathbf{1}^\top_n
\end{equation*}
which yields
\begin{equation*}
\mathbf{u}_1=Qm^*\textbf{h} = \frac{1}{\sqrt{n(n-1)}}\sum_{i=1}^{n-1} \left(\textbf{x}_i \otimes\mathbf{1}^\top_n\right)\left(\frac{1}{n}\textbf{1}_n\right) = \frac{1}{\sqrt{n(n-1)}}\sum_{i=1}^{n-1} \textbf{x}_i.
%\|\textbf{h}\|_2^2 B\textbf{y} = \frac{1}{\sqrt{n(n-1)}}B\textbf{1}_{n-1}\\
%&=&\frac{1}{\sqrt{n(n-1)}}\sum_{i=1}^{n-1}\mathbf{x}_i\otimes\mathbf{1}_n}
\end{equation*}
and
\begin{equation}\label{2norm}
\|\mathbf{u}_1\|_2^2=\|Qm^*\textbf{h}\|_2^2 = \frac{1}{{n(n-1)}}\sum_{i=1}^{n-1} \mathbf{x}_i^\top \mathbf{x}_i = 1/n.
\end{equation}
In this example, the sign of $m^*$ is unimportant because perturbations of $M$ by both $m^*$ and $-m^*$ result in the same increase of $\|\mathbf{h}\|_2$.
We note that the optimal $m^*$ is not unique since it depends on orthonormal basis for the null-space of $\textbf{1}^\top$.
The Euclidean norm of the linear response of the invariant probability vector $\mathbf{h}$ is independent of basis choice.

\section{Maximising the linear response of the expectation of an observable}
\label{sect:expectation}
In this section, we consider maximizing the linear response of the expected value of a cost vector $\mathbf{c}$ with respect to the invariant probability vector $\mathbf{h}$. The computations developed in this section will be used in section \ref{sect:numeric} to solve a discrete version of the problem of maximizing the linear response of an observable with respect to the invariant measure of a stochastically perturbed dynamical system.

%This problem is much simpler than the problem of maximising the $\ell^2$ norm of the linear response of the invariant measure.
We recall that the linear response to the invariant probability vector $\mathbf{h}$ of a transition matrix $M$ under a perturbation matrix $m$ is denoted by $\mathbf{u}_1$.
Therefore we wish to select a perturbation matrix $m$ so that we maximise $\mathbf{c}^T\mathbf{u}_1$.
For $\mathbf{c}\in\R^n$, using (\ref{Lin-Resp}), we consider the following problem:
\begin{eqnarray}
\label{obj_lin_fun}
\max_{m\in\R^{n\times n}} &&\mathbf{c}^\top Qm\textbf{h}\\
\label{stoch_lin_fun}\mbox{subject to} && m^\top\mathbf{1} =\mathbf{0}\\
\label{norm_lin_fun}&&\|m\|_F^2-1=0\\
\label{zero_lin_fun}&& m_{ij} = 0 \text{ if } (i,j)\in N,
\end{eqnarray}
where $N = \{(i,j)\in \{1,\ldots,n\}^2: M_{ij}= 0 \text{ or } 1\}$. Note that as $m_{ij}$ takes the value 0 for all $(i,j)\in N$, we just need to solve (\ref{obj_lin_fun})--(\ref{norm_lin_fun}) for $(i,j)\not\in N$.

We employ Lagrange multipliers.
Consider the Lagrangian function
\begin{equation}\label{Lag-fun-lin-fun}
L(m,\boldsymbol\varrho,\nu) = \textbf{w}^\top m\textbf{h}-\boldsymbol\varrho^\top m^\top \textbf{1} - \nu(\|m\|_F^2-1),
\end{equation}
where $\textbf{w}^\top = \mathbf{c}^\top Q \in\R^n$ and $\boldsymbol\varrho\in\R^n, \nu\in\R$ are the Lagrange multipliers. Differentiating (\ref{Lag-fun-lin-fun}) with respect to $m_{ij}$, we obtain
\begin{equation*}
\frac{\partial L}{\partial m_{ij}}(m,\boldsymbol\varrho,\nu) = w_ih_j-\varrho_j-2\nu m_{ij}.
\end{equation*}

Using the method of Lagrangian multipliers, we require
\begin{equation}
\label{eq72}
w_ih_j-\varrho_j-2\nu m_{ij} = 0\text{ for }(i,j)\not\in N
\end{equation}
and
\begin{equation}\label{stoch_lin_fun_2}
\sum_{i:(i,j)\not\in N}m_{ij}=0\text{ for }j\in\{1,\dots,n\}.
\end{equation}
Equation (\ref{eq72}) yields $\varrho_j = -2\nu m_{ij} +w_ih_j$ for $(i,j)\not\in N$.
Using (\ref{stoch_lin_fun_2}), we calculate
\begin{equation*}
\sum_{i:(i,j)\not\in N}\varrho_j = |N_j^c|\varrho_j = h_j\sum_{i:(i,j)\not\in N}w_i,
\end{equation*}
where $N_j^c = \{i: (i,j)\not\in N\}$.
Thus, substituting $\varrho_j=(h_j/|N_j^c|)\sum_{l:(l,j)\not\in N}w_l$ we obtain
\begin{equation}
\label{60a}
m_{ij}^* = \frac{-\varrho_j+w_ih_j}{2\nu} = \frac{h_j}{2\nu}\left(w_i-\frac{1}{|N_j^c|}\sum_{l:(l,j)\not\in N}w_l\right),
\end{equation}
We determine the sign of $\nu$ by checking the standard sufficient second order conditions for $m_{ij}^*$ to be a maximum (see e.g.\ Theorem 9.3.2 \cite{fletcher1991practical}).
The matrix $m^*$ satisfies the first-order equality constraints (\ref{stoch_lin_fun})-(\ref{zero_lin_fun}) and $\frac{\partial L}{\partial m_{ij}}(m^*,\boldsymbol\varrho,\nu)=0$ for $(i,j)\not\in N$.
We compute
\begin{equation}\label{Hessian-expected}
\frac{\partial^2 L}{\partial m_{ij}\partial m_{kl}}(m^*,\boldsymbol\varrho,\nu) = -2\nu\delta_{(i,j),(k,l)};
\end{equation}
thus, the Hessian matrix of the Lagrangian function is $H(m^*,\boldsymbol\varrho,\nu) = -2\nu$ Id$_{n^2-|N|}$.
If $\nu>0$ then for any $\mathbf{s}\in\R^{n^2-|N|}\setminus\{\mathbf{0}\}$ (indeed for any $\mathbf{s}\in \R^{n^2}\setminus \{\mathbf{0}\}$), one has $\mathbf{s}^\top H(m^*,\boldsymbol\varrho,\nu) \mathbf{s} <0$, satisfying the second-order sufficient condition.
%;  this will also hold for any $\mathbf{s}$ in a subset of $\R^{n^2-|N|}\setminus\{\mathbf{0}\}$. Thus, the conditions for Theorem 9.3.2 \cite{fletcher1991practical} are satisfied and we can use it to ensure that the solution is the maximiser when $\nu>0$ (alternatively, imposing $\nu<0$ will give the \hl{minimiser).}

Using ~\eqref{60a}, we must select $\nu$ to ensure $\|m^*\|_F=1$. Writing $m^*_{ij}=\frac{\widetilde{m}_{ij}}{\nu}$, the constraint $\|m^*\|_F=1$ implies $\nu =\theta\|\widetilde{m}\|_F$, where $\theta\in\{-1,1\}$.
As we require that $\nu>0$ for the solution to be the maximiser, we conclude that $\theta=1$ and
\begin{equation}\label{nu-Expectedvalue}
\nu = \|\widetilde{m}\|_F.
\end{equation}

\subsection{Algorithm for solving problem (\ref{obj_lin_fun})-(\ref{zero_lin_fun})}\label{Algo-fun}
We can solve problem (\ref{obj_lin_fun})-(\ref{zero_lin_fun}) using the following algorithm, which exploits sparsity of $M$.
\newpage
\begin{multicols}{2}
\textbf{Algorithm 3}
\begin{enumerate}
\item Compute the invariant probability vector $\textbf{h}$ of $M$.
\item Solve $\left(I_n-M+\textbf{h}\textbf{1}^\top\right)^\top\textbf{w} = \textbf{c}$ for $\textbf{w}$.
%\item Set $m = 0_{n\times n}$
%\item For $j=1,\dots, n$, compute $N_j^c$. If $|N_j^c|>1$ then set $m(N_j^c,j) = h(j)(\textbf{w}(N_j^c)-\text{mean}(\textbf{w}(N_j^c))$
\item Calculate $m^*_{ij}$ according to (\ref{60a}), where $\nu$ is given by ~\eqref{nu-Expectedvalue}.

\end{enumerate}
\columnbreak
\textbf{Matlab Code}
\begin{verbatim}
function m = lin_resp_fun(M,c)
n=length(M);
%Step 1
[V,D] = eigs(M,1);
h = V;
h = h/sum(h);
%Step 2
Z = eye(n)-M+h*ones(1,n);
w = Z'\c;
%Step 3
m = zeros(n);
for j=1:n
    N_j = find(M(:,j)>10^-7);
    if(length(N_j) > 1)
        m(N_j,j) = h(j)*(w(N_j)- mean(w(N_j)));
    end
end
m = m./(norm(m,'fro'));
end
\end{verbatim}
\end{multicols}
\begin{remark}
For large, sparse $M$, one can replace Step 2 above with:  Solve the following (sparse) linear system for $w$
\begin{equation}
\left(\begin{array}{c}
I-M^\top \\
\mathbf{h}^\top
\end{array} \right)\mathbf{w} = \left( \begin{array}{c}
\mathbf{c}-(\mathbf{h}^\top\mathbf{c})\mathbf{1} \\
\mathbf{h}^\top\mathbf{c}
\end{array} \right).
\end{equation}
\end{remark}
\subsection{Analytic examples}
\subsubsection{Analytic Solution for $M\in \mathbb{R}^{2\times 2}$}
Suppose that $M\in\R^{2\times 2}$ and we would like to solve (\ref{obj_lin_fun})-(\ref{zero_lin_fun}) for $\mathbf{c}\in\R^2$, $\mathbf{c}\neq a\textbf{1}$, where $a\in\R$. As in the example in section \ref{example-analytic-2x2}, we only need to consider the case when $M$ is positive. From section \ref{example-analytic-2x2}, we have that
$$ M =  \left(\begin{array}{cc}
1-M_{21} & M_{12} \\
M_{21} & 1-M_{12}
\end{array}\right)
\text{ and }\mathbf{h} = d \left(\begin{array}{c}
M_{12} \\
M_{21}
\end{array}  \right),$$
where $d=\frac{1}{M_{12}+M_{21}}$. From ~\eqref{60a}, the solution is given by
\begin{equation*}
\begin{aligned}
m^*_{11} = -m^*_{21} = \frac{1}{2\nu}\frac{h_1}{2}(w_1-w_2) \\
m^*_{22} = -m^*_{12} = \frac{1}{2\nu}\frac{h_2}{2}(w_2-w_1) ,\\
\end{aligned}
\end{equation*}
where $\mathbf{w} = Q^\top\mathbf{c}$. As $\mathbf{c}\neq a\textbf{1}$, we have that $w_1-w_2\neq 0$; this is the case since using the equation $\mathbf{w} = Q^\top\mathbf{c}$ and the fact that $\mathbf{1}^\top$ is a left eigenvector of $Q^{-1}$, we see that if $\mathbf{w} = a\mathbf{1}$ then $\mathbf{c} = a\textbf{1}$, which is a contradiction. The constraint $\|m^*\|^2_F = 1$ implies $(2\nu)^2= \frac{1}{2}\left((h_1^2+h_2^2)(w_1-w_2)^2\right) = d^2(w_1-w_2)^2\left(\frac{M_{12}^2+M_{21}^2}{2}\right)$ and so $2\nu = \theta d(w_1-w_2)\sqrt{\frac{M_{12}^2+M_{21}^2}{2}}$, where $\theta\in\{-1,1\}$. Therefore
\begin{equation}\label{m-opt-exp-2x2}
\begin{aligned}
m^* &= \frac{\theta}{d(w_1-w_2)\sqrt{2(M_{12}^2+M_{21}^2)}} \left(\begin{array}{cc}
d M_{12} (w_1-w_2) & dM_{21} (w_1-w_2)  \\
dM_{12} (w_2-w_1)  &dM_{21} (w_2-w_1)
\end{array} \right)\\
& = \frac{\theta}{\sqrt{2(M_{12}^2+M_{21}^2)}} \left(\begin{array}{cc}
M_{12} & M_{21} \\
-M_{12} &-M_{21}
\end{array} \right).
\end{aligned}
\end{equation}
As we require $\nu >0$ (see discussion following equation ~\eqref{Hessian-expected} for the maximisation condition), we will need that sign($\theta (w_1-w_2))>0$. Thus, if $w_1-w_2>0$ then $\theta=1$ and if $w_1-w_2<0$ then $\theta=-1$. Using this we obtain
\begin{eqnarray}
m^* = \left\{
        \begin{array}{ll}
          \frac{1}{\sqrt{2(M_{12}^2+M_{21}^2)}} \left(\begin{array}{cc}
M_{12} & M_{21} \\
-M_{12} &-M_{21}
\end{array} \right), & \hbox{if $w_1>w_2$;} \\
          \frac{1}{\sqrt{2(M_{12}^2+M_{21}^2)}} \left(\begin{array}{cc}
-M_{12} & -M_{21} \\
M_{12} &M_{21}
\end{array} \right), & \hbox{if $w_2>w_1.$}
        \end{array}
      \right.
\end{eqnarray}
Using $\mathbf{u}_1=Qm^*\mathbf{h}$ and following calculations similar to those immediately succeeding (\ref{optcase}) in section \ref{example-analytic-2x2}, we obtain
\begin{eqnarray}
\mathbf{c}^\top\mathbf{u}_1 = \left\{
        \begin{array}{ll}
          \frac{\sqrt{M_{12}^2+M_{21}^2}}{\sqrt{2}(M_{12}+M_{21})^2}(c_1- c_2), & \hbox{if $w_1>w_2$;} \\
          \frac{\sqrt{M_{12}^2+M_{21}^2}}{\sqrt{2}(M_{12}+M_{21})^2}(c_2 - c_1), & \hbox{if $w_2>w_1$.}
        \end{array}
      \right.
\end{eqnarray}

\subsubsection{Analytic Solution for $M=\frac{1}{n}1_{n\times n}$}
Suppose that $M=\frac{1}{n}1_{n\times n}$ and that we would like to solve the problem (\ref{obj_lin_fun})-(\ref{zero_lin_fun}) for $\mathbf{c}\in\R^n$, $\mathbf{c}\neq a\textbf{1}$, where $a\in\R$. In this case, we have that $\textbf{h} = \frac{1}{n}\textbf{1}$ and $Q = I$; thus we have that $\textbf{w}=Q^\top \mathbf{c} = \mathbf{c}$. Also note that in this case, $N=\emptyset$ and $|N^c_j|=n$. Thus, we obtain
\begin{equation}
\begin{aligned}
m^*_{ij} &= \frac{1}{2\nu n}\left(c_i-\frac{1}{n}\sum_{k=1}^nc_k\right)\\
%&= \frac{1}{\sqrt{n}\sqrt{\sum_{k=1}^nc_k^2-\frac{1}{n}\left(\sum_{k=1}^nc_k\right)^2}}\left(c_i-\frac{1}{n}\sum_{k=1}^nc_k\right)\\
&=
\frac{c_i-\bar{\mathbf{c}}}{2\nu n},
\end{aligned}
\end{equation}
where $\bar{\mathbf{c}}=\frac{1}{n}\sum_{i=1}^n c_i$. Using the constraint $\|m^*\|_F^2 = 1$ we get that $2\nu = \theta\sigma(\mathbf{c})$, where $\theta\in\{-1,1\}$  and $\sigma(\mathbf{c})=\left(\frac{1}{n}\sum_{i=1}^n(c_i-\bar{\mathbf{c}})^2\right)^{1/2}$. We require $\nu >0$ for the solution to be a maximiser. As $\sigma(\mathbf{c})>0$, this requirement implies that $\theta = 1$. Thus, we finally have that
\begin{equation}
m^* = \frac{1}{n\sigma(\mathbf{c})}(\mathbf{c}\mathbf{1}^\top - \mathbf{\bar{c}}1_{n\times n})
\end{equation}
and
\begin{equation}
\mathbf{c}^\top \mathbf{u}_1 = \mathbf{c}^\top Qm^*\mathbf{h} = \frac{1}{n^2\sigma(\mathbf{c})}\left(n\|\mathbf{c}\|_2^2 - n\bar{\mathbf{c}}\sum_{i=1}^nc_i \right) = \frac{1}{n\sigma(\mathbf{c})}\left(\|\mathbf{c}\|_2^2 - n\left(\bar{\mathbf{c}}\right)^2\right).
\end{equation}

\section{Maximising the linear response of the rate of convergence to equilibrium}
\label{sect:rate}
In this section, we consider maximizing the linear response of the rate of convergence of the Markov chain to its equilibrium measure.
We achieve this by maximizing the linearised change in the magnitude of the second eigenvalue $\lambda_2$ of the stochastic matrix $M$.
The computations in this section will be applied in section \ref{sect:numeric} to solve a discrete version of the problem of maximizing the linear response of the rate of convergence to equilibrium for some stochastically perturbed dynamical system.
A related perturbative approach \cite{FrSa} increases the mixing rate of (possibly periodically driven) fluid flows by perturbing the advective part of the dynamics and solving a linear program to increase the spectral gap of the generator (infinitesimal operator) of the flow. In \cite{FGTW} kernel perturbations related to those used in section \ref{sect:numeric} were optimised to drive a nonequilibrium density toward equilibrium by solving a convex quadratic program with linear constraints.

Because $M$ is aperiodic and irreducible, $\lambda_1=1$ is the only eigenvalue on the unit circle.
Let $\lambda_2\in \mathbb{C}$ be the eigenvalue of $M$ strictly inside the unit circle with largest magnitude.
Denote by $\mathbf{l}_2\in \mathbb{C}^n$ and $\mathbf{r}_2\in\mathbb{C}^n$ the left and right eigenvectors of $M$ corresponding to $\lambda_2$.
We assume that we have the normalisations $\mathbf{r}_2^*\mathbf{r}_2=1$ and $\mathbf{l}_2^*\mathbf{r}_2=1$.
Considering the small perturbation of $M$ to $M+\varepsilon m$, by standard arguments (e.g.\ Theorem 6.3.12 \cite{hornjohnson}), one has
\begin{equation}
\label{dlambda}
\frac{d\lambda_2(\varepsilon)}{d\varepsilon}\bigg|_{\varepsilon = 0}=\mathbf{l}_2^*m\mathbf{r}_2,
\end{equation}
where $\lambda_2(\varepsilon)$ is the second largest eigenvalue of $M+\varepsilon m$.
We wish to achieve a maximal decrease in the magnitude of $\lambda_2$, or equivalently a maximal decrease in the real part of the logarithm of $\lambda_2$.
Denote by $\Re(\cdot)$ and $\Im(\cdot)$ the real and imaginary parts, respectively.
Now $d(\Re(\log\lambda_2(\varepsilon)))/d\varepsilon=\Re(d\log(\lambda_2(\varepsilon))/d\varepsilon)=\Re((d\lambda_2(\varepsilon)/d\varepsilon)/\lambda_2(\varepsilon))$, which, using (\ref{dlambda}) becomes
\begin{eqnarray}
\label{dlambda2}
\lefteqn{\Re((d\lambda_2(\varepsilon)/d\varepsilon)/\lambda_2)|_{\varepsilon =0}}\\
\nonumber&=&
\frac{\left(\Re(\mathbf{l}_2)^\top m \Re(\mathbf{r}_2)+\Im(\mathbf{l}_2)^\top m \Im(\mathbf{r}_2)\right)\Re(\lambda_2)+\left(\Re(\mathbf{l}_2)^\top m \Im(\mathbf{r}_2)-\Im(\mathbf{l}_2)^\top m \Re(\mathbf{r}_2)\right)\Im(\lambda_2)}{|\lambda_2|^2}.
\end{eqnarray}

Similarly to Section \ref{sect:expectation} we now have the optimisation problem:
%This problem is much simpler than the problem of maximising the $\ell^2$ norm of the linear response of the invariant measure.
%We recall that the linear response to the invariant probability vector $\mathbf{h}$ of a transition matrix $M$ under a perturbation matrix $m$ is denoted by $\mathbf{u}_1$.
%Therefore we wish to select a perturbation matrix $m$ so that we maximise $\mathbf{c}^T\mathbf{u}_1$.
%For $\mathbf{c}\in\R^n$, we consider the following problem:
\begin{eqnarray}
\label{obj_eval2}
\hspace*{-15cm}\min_{m\in\R^{n\times n}} && \hspace*{-.7cm}\left(\Re(\mathbf{l}_2)^\top m \Re(\mathbf{r}_2)+\Im(\mathbf{l}_2)^\top m \Im(\mathbf{r}_2)\right)\Re(\lambda_2)+\left(\Re(\mathbf{l}_2)^\top m \Im(\mathbf{r}_2)-\Im(\mathbf{l}_2)^\top m \Re(\mathbf{r}_2)\right)\Im(\lambda_2)\\
\label{stoch_lin_fun_eval2}\hspace*{-15cm}\mbox{subject to} && m^\top\mathbf{1} =\mathbf{0}\\
\label{norm_lin_fun_eval2}&&\|m\|_F^2-1=0\\
\label{zero_lin_fun_eval2}&& m_{ij} = 0 \text{ if } (i,j)\in N,
\end{eqnarray}
where $N = \{(i,j)\in \{1,\ldots,n\}^2: M_{ij}= 0 \text{ or } 1\}$. Note that as  $m_{ij}$ takes the value 0 for all $(i,j)\in N$, we just need to solve (\ref{obj_eval2})--(\ref{norm_lin_fun_eval2}) for $(i,j)\not\in N$.

Applying Lagrange multipliers, we proceed as in Section \ref{sect:expectation}, with the only change being to replace the expression (\ref{eq72}) with
\begin{equation}
\label{eq72a}
S_{ij}-\varrho_j-2\nu m_{ij} = 0\text{ for }(i,j)\not\in N,
\end{equation}
where \begin{equation}
\label{70a}
S_{ij}:=\left(\Re(\mathbf{l}_2)_i\Re(\mathbf{r}_2)_j+\Im(\mathbf{l}_2)_i\Im(\mathbf{r}_2)_j\right)\Re(\lambda_2)+\left(\Re(\mathbf{l}_2)_i\Im(\mathbf{r}_2)_j-\Im(\mathbf{l}_2)_i\Re(\mathbf{r}_2)_j\right)\Im(\lambda_2).
\end{equation}
Following the steps in section \ref{sect:expectation} we obtain
\begin{equation}
\label{70b}
m_{ij}^* = \frac{-\varrho_j+S_{ij}}{2\nu} = \frac{\left(S_{ij}-\frac{1}{|N_j^c|}\sum_{l:(l,j)\not\in N}S_{lj}\right)}{2\nu},
\end{equation}
where $(i,j)\not\in N$ and $N_j^c = \{i: (i,j)\not\in N\}$. Next, we use a similar argument as in section \ref{sect:expectation} to select the correct sign of $\nu$. As our objective function is linear, the only non-linear term in the Lagrangian for this problem is from constraint ~\eqref{zero_lin_fun_eval2}; thus, as in section \ref{sect:expectation}, we again have that
\begin{equation*}
\frac{\partial^2 L}{\partial m_{ij}\partial m_{kl}}(m^*,\boldsymbol\varrho,\nu) = -2\nu\delta_{(i,j),(k,l)}.
\end{equation*}
Thus, the Hessian matrix of the Langrangian function is $H(m^*,\boldsymbol\varrho,\nu) = -2\nu$ Id$_{n^2-|N|}$. Using Theorem 9.3.2 \cite{fletcher1991practical} and the argument for the necessary condition for maximisation in section \ref{sect:expectation}, we conclude that imposing $\nu<0$ will ensure that the solution is the minimiser.

Using the constraint (\ref{norm_lin_fun_eval2}), we select $\nu$ to ensure $\|m^*\|_F=1$. Writing $m^*_{ij}=\frac{\widetilde{m}_{ij}}{\nu}$, the constraint $\|m^*\|_F=1$ will give us $\nu =\theta\|\widetilde{m}\|_F$, where $\theta\in\{-1,1\}$. As we require that $\nu <0$ for the solution to be the minimiser, we conclude that $\theta=-1$ and
\begin{equation}\label{nu-Mixing}
\nu = -\|\widetilde{m}\|_F
\end{equation}
\subsection{Algorithm}\label{Algo-Mix}
The following algorithm can be used to compute the optimal perturbation $m^*$ to maximise the linear response of the rate of convergence to equilibrium.
\begin{multicols}{2}
\textbf{Algorithm 4}
\begin{enumerate}
\item Compute $\textbf{h}$ as the invariant probability vector of $M$. Compute $\textbf{r}_2$ and $\textbf{l}_2$, the right and left eigenvectors corresponding to the second largest eigenvalue of $M$,  normalised as $\textbf{r}_2^*\mathbf{r}_2=1$ and $\textbf{l}_2^*\mathbf{r}_2=1$.
\item Construct the matrix $S$ from (\ref{70a}).
%\item Set $m = 0_{n\times n}$
%\item For $j=1,\dots, n$, compute $N_j^c$. If $|N_j^c|>1$ then set $m(N_j^c,j) = (S(N_j^c,j)-\text{mean}(S(N_j^c,j))$, where $S(N_j^c,j)$ are the $N_j^c$ elements of the $j$-th column of $S$.
\item Calculate $m^*_{ij}$ according to (\ref{70b}), where $\nu$ is given by ~\eqref{nu-Mixing}.

\end{enumerate}
\columnbreak
\textbf{Matlab Code}
\begin{verbatim}
function m = lin_resp_eval2(M)
%Step 1
[V,D] = eigs(M,2);
if abs(D(2,2))>abs(D(1,1))
    V(:,[1,2]) = V(:,[2,1]);
    D(:,[1,2]) = D(:,[2,1]);
end
h = V(:,1);
h = h/sum(h);
r = V(:,2);
[V1,D1] = eigs(M',2);
if abs(D1(2,2))>abs(D1(1,1))
    V1(:,[1,2]) = V1(:,[2,1]);
    D1(:,[1,2]) = D1(:,[2,1]);
end
l = V1(:,2);
l = (1/(conj(l)'*r))*V1(:,2);
%Step 2
d = D(2,2);
S=real(d)*(real(l)*real(r)'+imag(l)*imag(r)')...
	+imag(d)*(real(l)*imag(r)'-imag(l)*real(r)');
%Step 3
n=length(M);
m = zeros(n);
for i=1:n
    K = find(M(:,i)>10^-7);
    if(length(K) > 1)
        m(K,i) = (S(K,i)- mean(S(K,i)));
    end
end
m = -m./(norm(m,'fro'));
end
\end{verbatim}
\end{multicols}

\subsection{Analytic example}

\subsubsection{Analytic Solution for $M\in\R^{2\times 2}$}
Suppose that $M\in\R^{2\times 2}$ and we would like to solve (\ref{obj_eval2})-(\ref{zero_lin_fun_eval2}). As in section \ref{example-analytic-2x2} for $M\in\R^{2\times 2}$, we only need to consider the case when $M$ is positive. Writing
\begin{equation*}
M = \left(\begin{array}{cc}
M_{11} & 1-M_{22} \\
1-M_{11} & M_{22}
\end{array}\right),
\end{equation*}
where $M_{11},M_{22}\in (0,1)$, we compute $\lambda_2 = M_{11}+M_{22}-1,
\mathbf{r}_2 =\frac{1}{\sqrt{2}} \left(\begin{array}{c}
-1\\
1
\end{array}\right)$ and $\mathbf{l}_2 = \frac{\sqrt{2}}{M_{11}+M_{22}-2}\left(\begin{array}{c}
1-M_{11} \\
M_{22}-1
\end{array}\right).$ Using these computations and ~\eqref{70a}, we have that
\begin{equation}
S_{11} = \lambda_2 \frac{M_{11}-1}{M_{11}+M_{22}-2} = -S_{12}\text{ and } S_{22} = \lambda_2 \frac{M_{22}-1}{M_{11}+M_{22}-2}=-S_{21}.
\end{equation}
With this and ~\eqref{70b}, we have that
\begin{equation}
m^*_{11} = -m^*_{21} = \frac{\lambda_2}{4\nu}\text{ and }m^*_{22} = -m^*_{12} = \frac{\lambda_2}{4\nu}.
\end{equation}
Using the constraint $\|m^*\|_F^2 = 1$, we get $2\nu = \theta \lambda_2$, where $\theta\in\{-1,1\}$. As we require $\nu<0$ for the solution to be the minimiser, if $\lambda_2<0$, i.e. $M_{11}+M_{22}<1$, then $\theta = 1$ and if $\lambda_2>0$, i.e. $M_{11}+M_{22}>1$, then $\theta = -1$. Thus, we have that
\begin{eqnarray}
m^* = \left\{
        \begin{array}{ll}
          \frac{1}{2}\left(\begin{array}{cc}
1 & -1 \\
-1 & 1
\end{array} \right), & \hbox{if $M_{11}+M_{22}<1$;} \\
          \frac{1}{2}\left(\begin{array}{cc}
-1 & 1 \\
1 & -1
\end{array} \right), & \hbox{if $M_{11}+M_{22}>1$.}
        \end{array}
      \right.
\end{eqnarray}
Using ~\eqref{dlambda2} and the fact that $m^*, \mathbf{l}_2, \mathbf{r}_2$ and $\lambda_2$ are real, we finally obtain
\begin{eqnarray}
\frac{d(\Re(\log\lambda_2(\varepsilon)))}{d\varepsilon}\bigg|_{\varepsilon=0} = \frac{1}{\lambda_2}\mathbf{l}^*_2m^*\mathbf{r}_2 = \left\{
        \begin{array}{ll}
          \frac{1}{M_{11}+M_{22}-1} = \frac{1}{\lambda_2}, & \hbox{if $M_{11}+M_{22}<1$;} \\
          \frac{-1}{M_{11}+M_{22}-1}= \frac{-1}{\lambda_2}, & \hbox{if $M_{11}+M_{22}>1$.}
        \end{array}
      \right.
\end{eqnarray}

\section{Optimizing linear response for a general sequence of matrices}
\label{sect:sequential}
In this section we extend the ideas of Sections \ref{sect:l2} and \ref{sect:expectation} to derive the linear response of the Euclidean norm of the invariant probability vector $\mathbf{h}$ and the expectation of an observable $\mathbf{c}$ , when acted on by a \emph{finite sequence of matrices}.
We will then introduce and solve an optimization problem which finds the sequence of perturbation matrices that achieve these maximal values.
\subsection{Linear response for the invariant measure}
Let $M^{(0)},M^{(1)},\dots,M^{(\tau-1)}$ be  a fixed finite sequence of column stochastic matrices.
Furthermore, let $m^{(t)}$,  $t\in \{0,\ldots,\tau-1\}$ be a  sequence of perturbation matrices. Take an arbitrary probability vector $\textbf{h}^{(0)}$ and set
\[
\textbf{h}^{(t+1)}=M^{(t)}\textbf{h}^{(t)}, \quad \text{for $t\in \{0,\ldots,\tau-1\}.$}
\]
We now want to derive the formula for the linear response of $\textbf{h}^{(\tau)}$. We require that
\begin{equation}\label{seq1} (M^{(t)}+\varepsilon m^{(t)})\bigg{(}\textbf{h}^{(t)}+\sum_{i=1}^{\infty}\varepsilon^i \textbf{u}^{(t)}_i \bigg{)} = \textbf{h}^{(t+1)}+\sum_{i=1}^{\infty}\varepsilon^i \textbf{u}^{(t+1)}_i, \end{equation}
where $\varepsilon\in\R$. We refer to $\textbf{u}^{(t)}_1$ as the linear response at time $t$.  By expanding the left-hand side of~\eqref{seq1}, we have
\begin{equation}\label{seq2}\left(M^{(t)}+\varepsilon m^{(t)}\right)\bigg{(}\textbf{h}^{(t)}+\sum_{i=1}^{\infty}\varepsilon^i \textbf{u}^{(t)}_i\bigg{)}=\textbf{h}^{(t+1)}+\varepsilon \left(M^{(t)}\textbf{u}^{(t)}_1+m^{(t)}\textbf{h}^{(t)}\right) + O(\varepsilon^2).\end{equation}
 Denoting for simplicity $\textbf{u}^{(t)}_1$ by $\textbf{u}^{(t)}$, it follows from~\eqref{seq1} and~\eqref{seq2}  that
 \begin{equation}\label{iter}
\textbf{u}^{(t+1)} = M^{(t)}\textbf{u}^{(t)}+m^{(t)}\textbf{h}^{(t)}.
\end{equation}
Set $\textbf{u}^{(0)}=0$. Iterating~\eqref{iter}, we obtain  that
\begin{equation}\label{lin-resp-seq}
\textbf{u}^{(\tau)} = \sum_{t=1}^{\tau-1}M^{(\tau-1)}\dots M^{(t)}m^{(t-1)}\textbf{h}^{(t-1)} + m^{(\tau-1)}\textbf{h}^{(\tau-1)}.
\end{equation}

\subsubsection{The optimization problem}
It follows from Proposition \ref{Prop-Kron-Prod}(vi) that
\begin{equation*}
\begin{aligned}
\textbf{u}^{(\tau)} = \widehat{\textbf{u}}^{(\tau)} &= \sum_{t=1}^{\tau-1} \left(\textbf{h}^{(t-1)\top}\otimes \left(M^{(\tau-1)}\cdots M^{(t)}\right)\right)\widehat{m}^{(t-1)} +(\textbf{h}^{(\tau-1)\top}\otimes \text{Id})\widehat{m}^{(\tau-1)}\\
&= \sum_{t=1}^{\tau-1}W^{(t-1)}\widehat{m}^{(t-1)}+W^{(\tau-1)} \widehat{m}^{(\tau-1)} \\
&= W\left(\begin{array}{c}
\widehat{m}^{(0)} \\
\vdots \\
\widehat{m}^{(\tau-1)}
\end{array} \right) = W\widehat{m},
\end{aligned}
\end{equation*}
where
\[
 W^{(t)}=\textbf{h}^{(t)\top}\otimes \left(M^{(\tau-1)}\cdots M^{(t+1)}\right) \quad \text{for $0\le t \le \tau-2$,} \quad W^{(\tau-1)}=\textbf{h}^{(\tau-1)\top}\otimes \text{Id}
 \]
and
\[
 W=\left(W^{(0)}|W^{(1)}|\dots|W^{(\tau-1)}\right).
\]
Note that the $W^{(t)}$s are $n\times n^2$ matrices, $W$ is an $n\times \tau n^2$ matrix and $\widehat{m}$ is a $\tau n^2$-vector.

We consider the following optimization problem, which maximises the response of the Euclidean norm of the response $\mathbf{u}^{(\tau)}$:
\begin{eqnarray}
\label{obj6}
\max_{\widehat{m}\in\R^{\tau n^2}} &&\|W\widehat{m}\|_2^2\\
\label{stoch3}\mbox{subject to} && A^{(t)}\widehat{m}^{(t)} = \textbf{0}\text{ for } t=0,\dots,\tau-1\\
\label{norm6}&&\sum_{t=0}^{\tau-1}\|\widehat{m}^{(t)}\|_2^2-1=0,
\end{eqnarray}
where $A^{(t)}$ is the constraint matrix (\ref{A-final}) associated to the matrix $M^{(t)}$ and conditions~\eqref{stoch2} and~\eqref{zeros}.

\subsubsection{Solution to the optimization problem}
We  want to reformulate the optimization problem with the constraints (\ref{stoch3}) removed.
We first note that~\eqref{stoch3} can be replaced by $A\widehat{m} =\textbf{0}$, where
\begin{equation}\label{A-final-seq}
A = \text{diag}(A^{(0)},\dots,A^{(\tau-1)}).
\end{equation}
Let $E^{(t)}$ be an $n^2\times \ell^{(t)}$ matrix whose columns form an  orthonormal basis of the null space of $A^{(t)}$ for $t=0,\dots,\tau-1$,  where
$\ell^{(t)}$ denotes the nullity of $A^{(t)}$. Then,
\begin{equation*}
E = \text{diag}(E^{(0)},\dots,E^{(\tau-1)})
\end{equation*}
is a matrix whose columns form an  orthonormal basis  of the null space of the matrix $A$ in (\ref{A-final-seq}).
Thus, if $\widehat{m}$ is an element of the null space of $A$ then,  $\widehat{m} = E\boldsymbol\alpha$ for a unique $\boldsymbol\alpha\in\R^{\sum_{t=0}^{\tau-1}\ell^{(t)}}$.
Finally, as $$\sum_{t=0}^{\tau-1}\|\widehat{m}^{(t)}\|_2^2 = \|\widehat{m}\|_2^2 = \|E\boldsymbol\alpha\|_2^2 = \|\boldsymbol\alpha\|_2^2,$$
we can reformulate the optimization problem (\ref{obj6})-(\ref{norm6}) as:
\begin{eqnarray}
\label{obj7}
\max_{\boldsymbol\alpha\in\R^{\sum_{t=0}^{\tau-1}\ell^{(t)}}} &&\|U\boldsymbol\alpha\|_2^2\\
&&\|\boldsymbol\alpha\|_2^2-1=0,
\end{eqnarray}
where \begin{equation}\label{seqU} U = WE = (W^{(0)}E^{(0)}|\dots|W^{(k-1)}E^{(k-1)}).\end{equation}
Arguing as in section \ref{sect-gen-final-soln}, we conclude that $\widehat{m}^* = E\boldsymbol\alpha^*$ maximises the Euclidean norm of the linear response $\textbf{u}^{(\tau)}$,
where $\boldsymbol\alpha^*\in\R^{\sum_{t=0}^{\tau-1}\ell^{(t)}}$ is the eigenvector corresponding to the largest eigenvalue of $U^\top U$ (with $U$ as in~\eqref{seqU}). Finally, if we denote $\mathbf{h}^{(t+1)}(\varepsilon) = \left(M^{(t)}+\varepsilon m^{(t),*}\right)\mathbf{h}^{(t)} $, we choose the sign of $m^{(t),*}$ so that $\|\mathbf{h}^{(t)}\|_2<\|\mathbf{h}^{(t)}(\varepsilon)\|_2$ for small $\varepsilon>0$ and for each $t\in\{1,\dots,\tau\}$; this is possible as $\textbf{h}^{(t)}$ is independent of $m^{(t)}$.

\subsection{Linear response for the expectation of an observable}
 In this section, we consider maximising the linear response of the expected value of an observable $\mathbf{c}$ with respect to the probability vector $\mathbf{h}^{(\tau)}$, when acted on by a \emph{finite sequence of matrices}.
 More explicitly, we consider the following problem: For $\mathbf{c}\in\R^n$
\begin{eqnarray}
\label{obj7a}
\max_{m^{(0)},m^{(1)},\ldots,m^{(\tau-1)}\in\R^{n\times n}} &&\mathbf{c}^\top\textbf{u}^{(\tau)}\\
\label{stoch7}\mbox{subject to} && m^{(t)\top}\textbf{1} = \textbf{0}\text{ for } t\in\{0,\dots,\tau-1\}\\
\label{norm7}&&\sum_{t=0}^{\tau-1}\|{m}^{(t)}\|_F^2-1=0\\
\label{zeros7}&&m^{(t)}_{ij}=0\text{ if }(i,j)\in N^{(t)}\text{ for } t\in\{0,\dots,\tau-1\},
\end{eqnarray}
where $\textbf{u}^{(t)}$ is the linear response at time $t$, $m^{(t)}_{ij}$ is the $(i,j)$ element of the matrix $m^{(t)}$ and $N^{(t)} = \{(i,j)\in \{1,\ldots,n\}^2: M^{(t)}_{ij} = 0\text{ or }1\}$. Multiplying ~\eqref{lin-resp-seq} on the left by $\mathbf{c}^\top$ we obtain
\begin{equation*}
\mathbf{c}^\top\mathbf{u}^{(\tau)} = \sum_{t=0}^{\tau-1}\mathbf{w}^{(t)\top}m^{(t)}\textbf{h}^{(t)},
\end{equation*}
where $\textbf{w}^{(t)\top}=\mathbf{c}^\top M^{(\tau-1)}\dots M^{(t+1)}$ for $t\in \{0,\dots,\tau-2\}$ and $\textbf{w}^{(\tau-1)\top} = \mathbf{c}^\top$. Note that as the values of $m^{(t)}_{ij}=0$ for $(i,j)\in N^{(t)}$, we just need to solve (\ref{obj7a})--(\ref{norm7}) for $(i,j)\not\in N^{(t)}$.

As in section \ref{sect:expectation}, we solve this problem using the method of Lagrange multipliers. We begin by considering the following Lagrangian function:
\begin{equation}\label{Lag-fun-lin-fun-seq}
L(m^{(0)},\dots,m^{(\tau-1)},\boldsymbol\varrho^{(0)},\dots,\boldsymbol\varrho^{(\tau-1)},\nu) = \sum_{t=0}^{\tau-1}\textbf{w}^{(t)\top}m^{(t)}\textbf{h}^{(t)} - \sum_{t=0}^{\tau-1}\boldsymbol\varrho^{(t)^\top} m^{(t)^\top}\textbf{1}-\nu\left(\sum_{t=0}^{\tau-1}\|m^{(t)}\|_F^2-1\right),
\end{equation}
where $\boldsymbol\varrho^{(t)}\in\R^n$ and $\nu\in\R$ are the Lagrange multipliers. Differentiating ~\eqref{Lag-fun-lin-fun-seq} with respect to $m^{(t)}_{ij}$, we get that
\begin{equation*}
\frac{\partial L}{\partial m^{(t)}_{ij}} (m^{(0)},\dots,m^{(\tau-1)},\boldsymbol\varrho^{(0)},\dots,\boldsymbol\varrho^{(\tau-1)},\nu) = w^{(t)}_{i}h^{(t)}_{j}-\varrho^{(t)}_{j}-2\nu m^{(t)}_{ij},
\end{equation*}
where $w^{(t)}_{i},h^{(t)}_{j},\varrho^{(t)}_{j}\in\R $ are the elements of the $n$-vectors $\textbf{w}^{(t)},\textbf{h}^{(t)}$ and $ \boldsymbol\varrho^{(t)}$ respectively.

Using the method of Lagrangian multipliers, we want that
\begin{equation*}
w^{(t)}_{i}h^{(t)}_{j}-\varrho^{(t)}_{j}-2\nu m^{(t)}_{ij} = 0\text{ for }(i,j)\not\in N^{(t)}
\end{equation*}
and
\begin{equation}\label{stoch_lin_fun_2_seq}
\sum_{i:(i,j)\not\in N^{(t)}}m^{(t)}_{ij} = 0\text{ for }j\in\{1,\dots,n\}, t\in\{0,\dots, \tau-1\}.
\end{equation}

We have that $\varrho^{(t)}_{j}= w^{(t)}_{i}h^{(t)}_{j}-2\nu m^{(t)}_{ij}$. Using ~\eqref{stoch_lin_fun_2_seq}, we obtain
\begin{equation*}
\sum_{i:(i,j)\not\in N^{(t)}}\varrho^{(t)}_{j}:=\left|N^{(t),c}_j\right|\varrho^{(t)}_{j} = h^{(t)}_{j}\sum_{i:(i,j)\not\in N^{(t)}}w^{(t)}_{i},
\end{equation*}
where $N_j^{(t),c} = \{i: (i,j)\not\in N^{(t)}\}$. Thus, we get that
\begin{equation*}
m^{(t),*}_{ij} = \frac{h^{(t)}_{j}\theta^{(t)}}{2\nu}\left(w^{(t)}_{i} - \frac{1}{\left|N^{(t),c}_j\right|}\sum_{i:(i,j)\not\in N^{(t)}}w^{(t)}_{i} \right),
\end{equation*}
where $(i,j)\not\in N^{(t)},\theta^{(t)}\in\{-1,1\}$ and $\nu>0$. Note that $\mathbf{w}^{(t)}$ and $\mathbf{h}^{(t)}$ are independent of $m^{(t)}$ and so we choose the sign of $m^{(t),*}$ by selecting $\theta^{(t)}$ such that $\mathbf{w}^{(t)\top}m^{(t),*}\mathbf{h}^{(t)}\ge 0$.  Finally, the normalisation $\nu>0$ is selected to ensure that we satisfy constraint ~\eqref{norm7}.
%\begin{equation*}
%m_i^{(j,l)} = \frac{h_i^{(l)}}{\sqrt{\sum_{i=0}^{k-1}\sum_{(j,l)\not\in N_i}\left(h_i^{(l)}\right)^2\left(w_i^{(j)}-\frac{1}{|{}_lN_i^c|}\sum_{j:(j,l)\not\in N_i}w_i^{(j)}\right)^2}}\left(w_i^{(j)} - \frac{1}{|{}_lN_i^c|}\sum_{j:(j,l)\not\in N_i}w_i^{(j)} \right).
%\end{equation*}

\section{Numerical Examples of optimal linear response for stochastically perturbed dynamical systems}
\label{sect:numeric}
We apply the techniques we have developed in Sections \ref{sect:l2}--\ref{sect:rate} to randomly perturbed dynamical systems.
We consider random dynamical systems of the form $x_{t+1}=T(x_t)+\xi_t$, $t=1,2,\ldots$, where $X\subset\mathbb{R}^d$, $T:X\to X$ is a measurable map on the phase space $X$ and the $\{\xi_k\}$ are i.i.d.\ random variables taking values in $X$ distributed according to a density $q:X\to\mathbb{R}^+$.
Suppose that $x_t$ is distributed according to the density $f_t:X\to \mathbb{R}^+$.
By standard arguments (e.g.\ \S10.5 \cite{lasota1985probabilistic}) one derives that $x_{t+1}$ is distributed according to the density $f_{t+1}(x)=\int_X q(x-T(y))f_t(y)\ dy$.
We thus define the annealed Perron-Frobenius (or transfer) operator
\begin{equation}
\label{annealedop}
\mathcal{P}f(x)=\int_X q(x-T(y))f(y)\ dy
 \end{equation}
 as the linear (Markov) operator that pushes forward densities under the annealed action of our random dynamical system.
More generally, writing $q(x-T(y)=k(x,y)$, we think of $k$ as a kernel defining the integral operator $\mathcal{P}f(x)=\int_X k(x,y)f(y)\ dy$.
We will assume that $k\in L^2(X\times X)$, which guarantees that $\mathcal{P}$ is a compact operator on $L^2(X)$;  see e.g.\ Proposition II.1.6 \cite{conway}.
A sufficient condition for $\mathcal{P}$ possessing a unique fixed point in $L^1$ is that there exists a $j$ such that $\int_X \inf_y k^{(j)}(x,y)\ dx>0$, where $k^{(j)}$ is the kernel associated with $\mathcal{P}^j$;  see Corollary 5.7.1 \cite{lasota1985probabilistic}.
This is a stochastic ``covering'' condition, which is satisfied by our examples, which are generated by transitive deterministic $T$ with bounded additive uniform noise.
In summary, we have a unique annealed invariant measure for our stochastically perturbed system and by compactness our transfer operator $\mathcal{P}$ has a spectral gap on $L^2(X)$.

%\textbf{more detail needed...} of  Let $\mu$ be the (assumed unique) annealed invariant measure of the stochastically perturbed system.
%Under the conditions on our perturbations (they are absolutely continuous and centred about 0, and...) the measure $\mu$ must be absolutely continuous and has a density function $h=d\mu/d\ell$ with respect to Lebesgue measure $\ell$, which is a fixed point of the transfer operator $\mathcal{P}$.
%Furthermore, the transfer operator $\mathcal{P}$ has a spectral gap for $k$ in some class...because...

\subsection{Ulam projection}
In order to carry out numerical computations, we project the operator $\mathcal{P}$ onto a finite-dimensional space spanned by indicator functions on a fine mesh of $X$.
Let $B_n=\{I_1,\ldots,I_n\}$ denote a partition of $X$ into connected sets, and  set $\mathcal{B}_n=\mbox{span}\{\mathbf{1}_{I_1},\ldots,\mathbf{1}_{I_n}\}$.
Define a projection $\pi_n:L^1(X)\to \mathcal{B}_n$ by $\pi_n(f)=\sum_{i=1}^n \left( \frac{1}{\ell(I_i)}\int_{I_i} f\ dx \right) \mathbf{1}_{I_i}$, where $\ell$ is Lebesgue measure; $\pi_n$ simply replaces $f|_{I_i}$ with its expected value.
We now consider the finite-rank  operator $\pi_n\mathcal{P}:L^1\to\mathcal{B}_n$;  this general approach is known as Ulam's method \cite{ulambook}.

We calculate
\begin{eqnarray}
\nonumber\pi_n\mathcal{P}f&=&\sum_{i=1}^n \left( \frac{1}{\ell(I_i)}\int_{I_i} \mathcal{P}f\ dx \right) \mathbf{1}_{I_i}\\
\nonumber&=&\sum_{i=1}^n \left( \frac{1}{\ell(I_i)}\int_{I_i} \int_X q(x-T(y))f(y)\ dy\ dx \right) \mathbf{1}_{I_i}\\
\nonumber&=&\sum_{i=1}^n \left( \frac{1}{\ell(I_i)} \int_X \underbrace{\int_{I_i} q(x-T(y))\ dx}_{:=\psi_i(y)} f(y)\ dy \right) \mathbf{1}_{I_i}\\
\label{piP1}&=&\sum_{i=1}^n \frac{\int_X \psi_i(y) f(y)\ dy}{\ell(I_i)}\mathbf{1}_{I_i}.
\end{eqnarray}
Putting $f=\sum_{j=1}^n f_j\mathbf{1}_{I_j}\in \mathcal{B}_n$, where $f_j\in\mathbb{R}, j=1,\ldots,n$, we have
\begin{eqnarray}
\nonumber\pi_n\mathcal{P}f&=&\sum_{i=1}^n\sum_{j=1}^n f_j \frac{\int_X \psi_i(y) \mathbf{1}_{I_j}(y)\ dy}{\ell(I_i)}\mathbf{1}_{I_i}\\
\label{piP2}&=&\sum_{i=1}^n\sum_{j=1}^n f_j \underbrace{\frac{\int_{I_j} \psi_i(y)\ dy}{\ell(I_i)}}_{:=M_{ij}}\mathbf{1}_{I_i},
\end{eqnarray}
where $M$ is the matrix representation of $\pi_n\mathcal{P}:\mathcal{B}_n\to\mathcal{B}_n$.

In our examples below, $X=[0,1]$ or $X=S^1$, and $q(x-Ty)=k(x,y):=\mathbf{1}_{B_\epsilon(Ty)}(x)/\ell(X\cap B_\epsilon(Ty))$, where $B_\epsilon(Ty)$ denotes an $\epsilon$-ball centred at the point $Ty$.
We require this slightly more sophisticated version of $q$ in order to ensure that we do not stochastically perturb points outside our domain $X$.
Our random dynamical systems therefore comprise deterministic dynamics followed by the addition of uniformly distributed noise in an $\epsilon$-ball (with adjustments made near the boundary of $X$).
This choice of $q$ leads to
\begin{equation}
\label{piP3}\psi_i(y)=\frac{\int_{I_i} \mathbf{1}_{B_\epsilon(Ty)}(x)\ dx}{\ell(X\cap B_\epsilon(Ty))}=\frac{\ell(I_i\cap B_\epsilon(Ty))}{\ell(X\cap B_\epsilon(Ty))}.
\end{equation}
Combining (\ref{piP2}) and (\ref{piP3}) we obtain
$$M_{ij}=\frac{\int_{I_j} \ell(I_i\cap B_\epsilon(Ty))/\ell(X\cap B_\epsilon(Ty))\ dy}{\ell(I_i)}.$$
From now on we assume that $I_i=[(i-1)/n,i/n), i=1,\ldots,n$, so that $\mathcal{B}_n$ is an partition of $X$ into equal length subintervals.
We now have that $\sum_{i=1}^n M_{ij}=1$ for each $j=1,\ldots,n$, and so $M$ is a column stochastic matrix.
We use the matrix $M$ to numerically approximate the operator $\mathcal{P}$ in the experiments below.

\subsubsection{Consistent scaling of the perturbation $m$}
In sections \ref{sect:lanford}--\ref{sect:doublelanford} we will think of the entries of the perturbation matrix $m$ as resulting from the matrix representation of the Ulam projection of a perturbation $\delta\mathcal{P}$ of $\mathcal{P}$.
To make this precise, we first write $f\in\mathcal{B}_n$ as
%First we compute the Ulam projection of $\delta P$.
$f=\sum_{j=1}^n \bar{f}_j\mathbf{1}_{I_j}$, and introduce a projected version of $\delta k$: $\pi_n(\delta k)=\sum_{i,j}\bar{\delta k}_{ij}\mathbf{1}_{I_i\times I_j}$, where the matrix $\bar{\delta k}_{ij}=(1/(\ell(I_i)\ell(I_j)))\int_{I_i\times I_j}\delta k(x,y)\ dy dx$.
We now explicitly compute the Ulam projection of $\delta P$:

\begin{eqnarray*}
\pi_n\delta \mathcal{P}(f)(z)&=&(1/\ell(I_i))\sum_{i=1}^n \left[ \int_{I_i}\delta \mathcal{P}(f)(x)\ dx\right] \mathbf{1}_{I_i}(z)\\
&=&(1/\ell(I_i))\sum_{i=1}^n \left[ \int_{I_i}\int_X\delta k(x,y)f(y)\ dy dx\right] \mathbf{1}_{I_i}(z)\\
&=&(1/\ell(I_i))\sum_{i=1}^n \left[ \int_{I_i}\sum_{j=1}^n\int_{I_j}\delta k(x,y)\bar{f}_j\ dy dx\right] \mathbf{1}_{I_i}(z)\\
&=&(1/\ell(I_i))\sum_{i,j=1}^n \bar{f}_j \left[ \int_{I_i\times I_j}\delta k(x,y)\ dy dx\right] \mathbf{1}_{I_i}(z)\\
&=&\sum_{i,j=1}^n \underbrace{\ell(I_j)\bar{\delta k}_{ij}}_{:=m_{ij}}\bar{f}_j \mathbf{1}_{I_i}(z)
\end{eqnarray*}
Thus, we have the relationship $m_{ij}=\ell(I_j)\bar{\delta k}_{ij}$ between the matrix representation of the projected version of the operator $\delta \mathcal{P}$ (namely $m$) and the elements of the projected version of the kernel (namely $\bar{\delta k}$).

We wish to fix the Hilbert-Schmidt norm of $\pi_n\delta \mathcal{P}$ to 1.
\begin{eqnarray}
\nonumber 1=\|\pi_n\delta \mathcal{P}\|_{HS}^2&=&\|\pi_n\bar{\delta k}\|_{L^2(X\times X)}^2\\
\nonumber&=&\left\|\sum_{i,j=1}^n\bar{\delta k}_{ij}^21_{I_i\times I_j}\right\|_{L^2(X\times X)}^2 \\
\nonumber&=&\sum_{i,j=1}^n\int_{I_i\times I_j}\bar{\delta k}^2dydx\\
\label{HSeq}&=&\sum_{i,j=1}^n\ell(I_i)\ell(I_j)\bar{\delta k}^2.
%&=&\ell(I_i)\ell(I_j)\|\bar{\delta k}\|_F^2
\end{eqnarray}
Since $\|m\|_F^2=\sum_{i,j=1}^n \ell(I_j)^2\bar{\delta k}_{ij}^2$, if we assume that $\ell(I_i)=1/n$, $1\le i\le n$, we obtain $\|m\|_F=(1/n)^2\|\bar{\delta k}\|_F^2$ and by (\ref{HSeq}) we know $\|\bar{\delta k}\|_F^2=n^2$.
%=\ell(I_j)^2/(\ell(I_i)\ell(I_j))=\ell(I_j)/\ell(I_i)$.
%Since we assume that $\ell(I_i)=\ell(I_j)=1/n$,
We thus conclude that enforcing $\|m\|_F=1$ will ensure $\|\pi_n\delta\mathcal{P}\|_{HS}=1$, as required.

\subsubsection{Consistent scaling for $h$ and $f$}
In sections \ref{sect:lanford}--\ref{sect:doublelanford} we will use vector representations of the invariant density $h$ and an $L^2$ function $c$.
We write $h=\sum_{i=1}^n \mathbf{h}_i\mathbf{1}_{I_i}$, where $\mathbf{h}\in\mathbb{R}^n$.
We normalise so that $\int_X h(x)\ dx=1$, which means that $\sum_{i=1}^n \mathbf{h}_i=n$.
Similarly, we write $c=\sum_{i=1}^n \mathbf{c}_i\mathbf{1}_{I_i}$, where $\mathbf{c}\in\mathbb{R}^n$.
We normalise so that $\int_X c(x)^2\ dx=1$, which means that $\sum_{i=1}^n \mathbf{c}_i^2=n$ or $\|\mathbf{c}\|_{\ell^2}=\sqrt{n}$.
%We have a possible problem with scaling the Frobenius norm of $M$ for different number of grid cells.
%In the optimisation we presently have a constraint that fixes the matrix $m$ to have Frobenius norm 1.
%The question is, for such $m$, what is the Hilbert-Schmidt norm of operators on $L^2$ that have $m$ as their Ulam projection?
%Under differing numbers $n$ of Ulam bins, we wish to know how to scale the RHS of the constraint $\|m\|_F=1$ with $n$ so that the corresponding operators always have HS-norm 1.
%For a linear operator of the form $\mathcal{P}f(x)=\int_X k(x,y)f(y) dy$, the Hilbert-Schmidt norm of $\mathcal{P}$, denoted $\|\mathcal{P}\|_{HS}$ is given by $\|k\|_{L^2(X)\times L^2(X)}$ \cite{conway} p.267.
%What is not clear to me yet is whether we can %In our setting, the $$\|\mathcal{P}\|_{HS}=\left(\int_X\int_X \frac{\mathbf{1}_{B_\epsilon(Ty)}(x)}{(\ell(X\cap B_\epsilon(Ty)))^2}\ dy\ dx\right)^{1/2}.$$
%%In the computations below, we scale $k$ (and therefore ...).

\subsection{A stochastically perturbed Lanford Map}
 \label{sect:lanford}
 The first example we consider is the stochastically perturbed Lanford map. We will use the numerical solution of the problems (\ref{obj5})-(\ref{zeros}) and (\ref{obj_lin_fun})-(\ref{zero_lin_fun}) for this map to solve the problem of maximising the $L^2$-norm of the linear response of the invariant measure and maximising the linear response of the expectation of an observable.
\subsubsection{Maximising the linear response of the $L^2$-norm of the invariant measure}
Let $T:S^1\rightarrow S^1$ be the stochastically perturbed Lanford map defined by
\begin{equation}\label{Lanford_Map_Noise}
T(x) = 2x +\frac{1}{2}x(1-x)+ \xi \text{ mod } 1,
\end{equation}
where $\xi\sim\mathcal{U}(0,\frac{1}{10})$ (uniformly distributed on an interval about 0 of radius 1/10). Let $M\in\R^{n\times n}$ be Ulam's discretization of the transfer operator of the map $T$ with $n$ partitions. Using Algorithm 2, we solve the problem (\ref{obj5})-(\ref{zeros}) for the matrix $M$ for $n=2000$ to obtain the optimal perturbation $m^*$. The top two singular values of the matrix $\widetilde{U}$, computed using MATLAB, are 0.0175 and 0.0167 (each with multiplicity one), which we consider to be strong numerical evidence that the leading singular value of $\widetilde{U}$ has multiplicity one. By Proposition \ref{Prop-Unique} we conclude that our computed $m^*$ is the unique optimal perturbation for the discretized system. The sign of the matrix $m^*$ is chosen so that $\|\textbf{h}_M\|_2<\|\textbf{h}_{M+\varepsilon m^*}\|_2$ for $\varepsilon>0$. Figure \ref{fig-Lanford-noise}(A) shows the Lanford map and figure \ref{fig-Lanford-noise}(B) presents the approximation of the invariant density $h$ of the Lanford map.
\begin{figure}[h]
\subfloat[Subfigure 1 list of figures text][Colourmap of the stochastically perturbed Lanford map. The colourbar indicates the values of the elements of the matrix.]{
\includegraphics[width=0.5\textwidth]{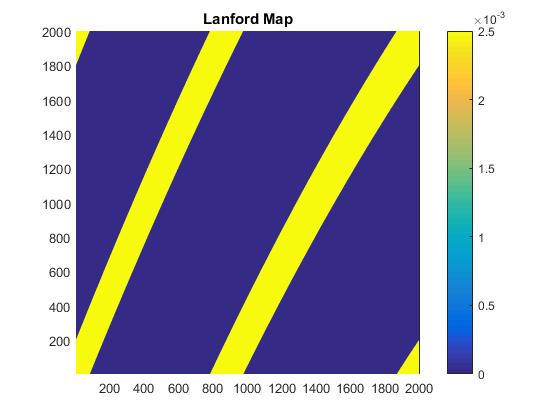}
\label{fig:subfig1}}
\subfloat[Subfigure 2 list of figures text][The invariant density $h$.]{
\includegraphics[width=0.5\textwidth]{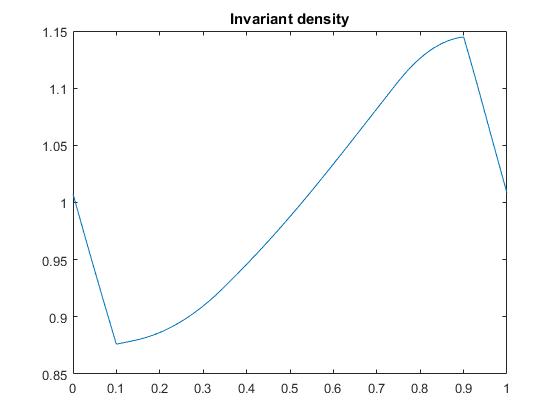}
\label{fig:subfig2}}
\qquad
\subfloat[Subfigure 3 list of figures text][The optimal perturbation $m^*$ for the problem. The colourbar indicates the values of the elements of the matrix.]{
\includegraphics[width=0.5\textwidth]{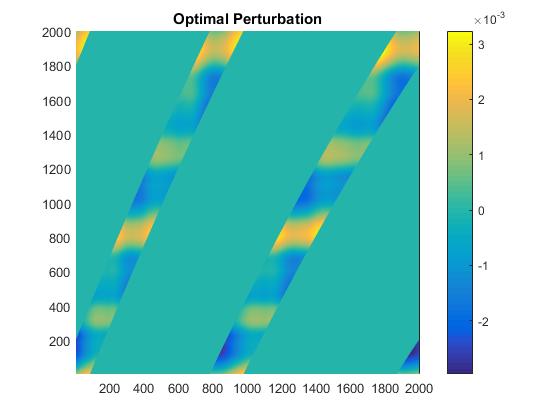}
\label{fig:subfig3}}
\subfloat[Subfigure 4 list of figures text][The optimal linear response $u^*_1$ of the invariant density.]{
\includegraphics[width=0.5\textwidth]{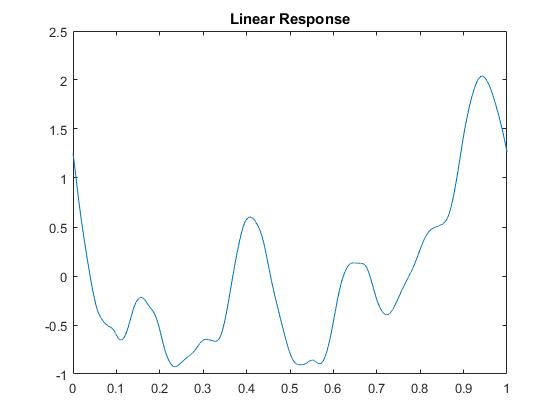}
\label{fig:subfig4}}
\caption{Solution to the problem of maximising the $L^2$-norm of the linear response of the stochastically perturbed Lanford map.}
\label{fig-Lanford-noise}
\end{figure}
Figure \ref{fig-Lanford-noise}(C) presents the optimal perturbation matrix $m^*$ which generates the maximal response. Figure \ref{fig-Lanford-noise}(D) presents the approximation of the associated linear response $u^*_1 = \sum_{i=1}^n\textbf{u}_1^*\textbf{1}_{I_i}$, for the perturbation $m^*$; for this example, we compute $\|u_1^*\|_{L^2}^2 \approx 0.6154$.
Figure \ref{fig-Lanford-noise}(C) shows that the selected perturbation preferentialy places mass in a neighbourhood of $x=0.4$ and $x=0.95$, consistent with local peaks in the response in Figure \ref{fig-Lanford-noise}(D).
%From Figure \ref{fig:subfig3}, we see that the optimal perturbation increases the transition probability to $(0.35,0.45)$ and $(0.9,1)$ the most. Also, the optimal perturbation decreases the transition probability to $(0.08,0.12)$, $(0.5,0.6)$ and $(0.8,0.9)$ the most; thus decreasing the probability of moving away from $(0.35,0.45)$ and $(0.9,1)$. \hl{This is the case because ...} \textbf{Not sure why}

Having computed the optimal linear response for a specific $n$, we verify in Table \ref{table-Lanford} that for various partition cardinalities, the $L^2$-norm of the approximation of the linear response $u_1^*$ converges. We also verify that $\|h_{M+\varepsilon m^*}-(h_M+\varepsilon u_1^*)\|_{L^2}^2$ is small for small $\varepsilon>0$. The 1000-fold improvement in the accuracy is consistent with the error terms of the linearization being of order $\varepsilon^{4}$ when considering the square of the $L^2$-norm (because $h_{M+\varepsilon m} = h_M+\varepsilon u_1 + O(\varepsilon^2)$, when we decrease $\varepsilon$ from $1/100$ to $1/1000$, the square of the error term of the linearization is changed by $((1/10)^2)^2=1/1000$). The table also illustrates the change in the norm of the invariant density when perturbed; we see that the norm of the invariant density increases when we perturb $M$ by $\varepsilon m^*$ and decreases when we perturb by $-\varepsilon m^*$, consistent with the choice of sign of $m^*$ noted above.

\begin{table}[h]
\begin{tabular}{|c|c||c|c|c|c|c|}
\hline
$n$ &$ \|u^*_1\|_{L^2}^2$& $\varepsilon$ & $\|h_{M+\varepsilon m^*}-(h_M+\varepsilon u_1^*)\|_{L^2}^2$ & $\|h_{M-\varepsilon m^*}\|^2_{L^2}$ & $\|h_M\|^2_{L^2}$ & $\|h_{M+\varepsilon m^*}\|^2_{L^2}$ \\
\hline
1500 & 0.6180 & 1/100 & 1.3523$\times 10^{-9}$ & 1.007131171 & 1.007824993 & 1.008646526 \\
\hline
 & & 1/1000 & 1.3459$\times 10^{-13}$ & 1.007749900 & 1.007824993 & 1.007901364 \\
\hline
1750 & 0.6165 & 1/100 & 1.3487$\times 10^{-9}$ & 1.007132553 & 1.007825008 & 1.008644876 \\
\hline
 & & 1/1000 & 1.3422$\times 10^{-13}$ & 1.007750064 & 1.007825008 & 1.007901226 \\
\hline
2000 & 0.6154 & 1/100 & 1.3452$\times 10^{-9}$ & 1.007133155 & 1.007825017 & 1.008644069 \\
\hline
 & & 1/1000 & 1.3388$\times 10^{-13}$ & 1.007750143 & 1.007825017 & 1.007901163 \\
\hline
\end{tabular}
\caption{Numerical results for maximising the linear response of the $L^2$-norm of the invariant probability measure of the stochastic Lanford Map.
Column 1: number of Ulam partitions;  Column 2:  optimal objective value;  Column 3: values of $\varepsilon$;  Column 4:  calculation of linearization error;  Columns 5-7:  demonstration that the $L^2$-norm of the invariant density increases and decreases appropriately under the small perturbation $\varepsilon m^*$.}
\label{table-Lanford}
\end{table}

\subsubsection{Maximising the linear response of the expectation of an observable}\label{example-log-fun}
In this section we find the perturbation that generates the greatest linear response of the expectation
\begin{equation*}
\langle c, h\rangle_{L^2} = \int_{[0,1]} c(x)h(x) dx,
\end{equation*}
where $c(x)=2\sin (\pi x)$ and the underlying dynamics are given by the map (\ref{Lanford_Map_Noise}). We consider problem (\ref{obj_lin_fun})-(\ref{zero_lin_fun}) with the vector $\mathbf{c}=(c_1,\dots,c_n)\in\R^n$, where $c_i = \frac{\sqrt{n}}{\|\tilde{\textbf{c}}\|_2}\tilde{c}_i$, $\tilde{c}_i = 2\sin (\pi x_i)$ and $x_i= \frac{i-1}{n}+\frac{1}{2n}$, $i=1,\ldots,n$.
Let $M\in\R^{n\times n}$ be the discretization matrix derived from Ulam's method. We use Algorithm 3 to solve problem (\ref{obj_lin_fun})-(\ref{zero_lin_fun}). Figure \ref{fig-Lanford-noise-fun} presents the optimal perturbation $m^*$ and the associated linear response $u^*_1$ for this problem. Note that the response in Figure \ref{fig-Lanford-noise-fun}(B) has positive values where $c(x)$ is large and negative values where $c(x)$ is small, consistent with our objective to increase the expectation of $c$. %Also, from figure \ref{fig:subfig6}, we see that the optimal perturbation perturbs the dynamics in such a way as to increase the probability of being around the center of the interval; this is to be expected since our observable gives more weight to values near $0.5$ and perturbing the dynamics as to give more probability of being near $0.5$ increases the value of the expectation.
In this example ($n=2000$), we obtain $\langle c, u^*_1\rangle_{L^2} \approx 0.2514$.
\begin{figure}[h]
\subfloat[Subfigure 1 list of figures text][The optimal perturbation $m^*$. The colourbar indicates the values of the elements of the matrix.]{
\includegraphics[width=0.5\textwidth]{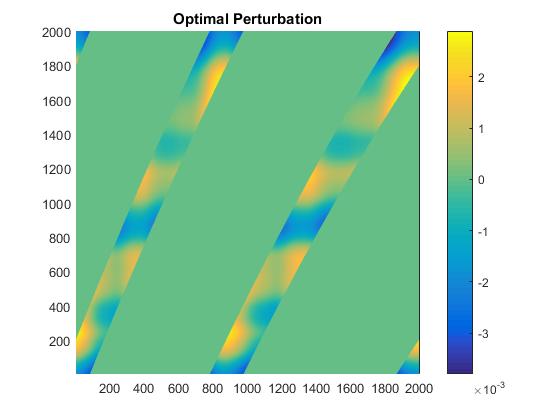}
\label{fig:subfig5}}
\subfloat[Subfigure 2 list of figures text][The optimal linear response $u^*_1$ of the invariant density.]{
\includegraphics[width=0.5\textwidth]{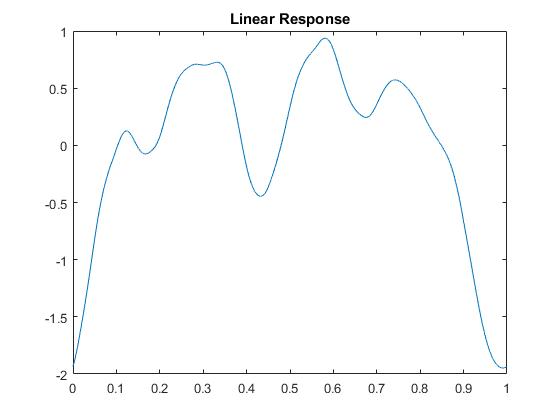}
\label{fig:subfig6}}
\caption{Solution to the problem of maximising the expectation of the response of observable $c(x)$ for the stochastically perturbed Lanford map.}
\label{fig-Lanford-noise-fun}
\end{figure}

Table \ref{table-Lanford-fun} provides numerical results for various partition cardinalities $n$.
We see that (i) the value of $ \langle c, u^*_1\rangle_{L^2}$ appears to converge when we increase $n$, (ii) the 100 fold improvement in accuracy is consistent with the error terms of the linearization being of order $\varepsilon^{2}$ as $h_{M+\varepsilon m} = h_M+\varepsilon u_1 + O(\varepsilon^2)$, and (iii) the expectation increases if we perturb in the direction $\varepsilon m^*$ and decreases if we perturb in the direction $-\varepsilon m^*$.

\begin{table}[h]

\hspace*{-4em}\begin{tabular}{|c|c||c|c|c|c|c|}
\hline
$n$ &$ \langle c, u^*_1\rangle_{L^2}$& $\varepsilon$ & $\langle c, h_{M+\varepsilon m^*}\rangle_{L^2}-\langle c,h_M+\varepsilon u^*_1\rangle_{L^2}$ & $\langle c, h_{M-\varepsilon m^*}\rangle_{L^2} $ & $\langle c, h_M\rangle_{L^2}$ & $\langle c, h_{M+\varepsilon m^*}\rangle_{L^2}$ \\
\hline
1500 & 0.2520 & 1/100 & -9.7038$\times 10^{-6}$ & 0.894337506 & 0.896867102 & 0.899377230 \\
\hline
 & & 1/1000 & -9.7311$\times 10^{-8}$ & 0.896615022 & 0.896867102 & 0.897118988 \\
\hline
1750 & 0.2517 & 1/100 & -9.6835$\times 10^{-6}$ & 0.894340765 & 0.896867054 & 0.899373916 \\
\hline
 & & 1/1000 & -9.7107$\times 10^{-8}$ & 0.896615302 & 0.896867054 & 0.897118612 \\
\hline
2000 & 0.2514 & 1/100 & -9.6677$\times 10^{-6}$ & 0.894343310 & 0.896867024 & 0.899371341 \\
\hline
 & & 1/1000 & -9.6948$\times 10^{-8}$ & 0.896615528 & 0.896867024 & 0.897118325 \\
\hline
5000 & 0.2503 & 1/100 & -9.6076$\times 10^{-6}$ & 0.894354420 & 0.896866939 & 0.899360182 \\
\hline
 & & 1/1000 & -9.6346$\times 10^{-8}$ & 0.896616557 & 0.896866939 & 0.897117127 \\
\hline
7000 & 0.2501 & 1/100 & -9.5961$\times 10^{-6}$ & 0.894356531 & 0.896866931 & 0.899358078 \\
\hline
 & & 1/1000 & -9.6230$\times 10^{-8}$ & 0.896616760 & 0.896866931 & 0.897116909 \\
\hline
\end{tabular}
\caption{Numerical results for maximising the linear response of the expectation of $c(x)=2\sin (\pi x)$ for the stochastic Lanford map. Column 1: number of bins;  Column 2:  optimal objective value;  Column 3: values of $\varepsilon$;  Column 4:  calculation of linearization error;  Columns 5-7:  demonstration that the expected value of the function $c$ increases and decreases appropriately under the small perturbation $\varepsilon m^*$.}
\label{table-Lanford-fun}
\end{table}

\subsection{A stochastically perturbed logistic Map}
In this section, we consider the problems of maximising the $L^2$-norm of the linear response of the invariant measure and maximising the linear response of the expectation of an observable.
The underlying deterministic dynamics is given by the logistic map, and this map is again stochastically perturbed.
\subsubsection{Maximising the linear response of the $L^2$-norm of the invariant measure}
Let $T_\xi:[0,1]\rightarrow [0,1]$ be the logistic map with noise defined by
\begin{equation}\label{Logistic_Map_Noise}
T_{\xi}(x) = 4x(1-x)+\xi_x,
\end{equation}
where
\begin{eqnarray*}
\xi_x\sim
\begin{cases}
\max\{0,\mathcal{U}(x, \frac{1}{10})\}&\text{ if }x<\frac{1}{10}\\
\mathcal{U}(x, \frac{1}{10})&\text{ if }\frac{1}{10}\leq x\leq\frac{9}{10}\\
\min\{\mathcal{U}(x, \frac{1}{10}),1\}&\text{ if }x>\frac{9}{10},
\end{cases}
\end{eqnarray*}
and $\mathcal{U}(x, \frac{1}{10})$ denotes the uniform distribution of radius $1/10$ centred at $x$. Let $M\in\R^{n\times n}$ be Ulam's discretization of the transfer operator of the map $T_\xi$ with $n$ partitions.
     We use Algorithm 2 to solve the optimisation problem (\ref{obj5})-(\ref{zeros}) with the matrix $M$ for $n=2000$ to obtain the optimal perturbation $m^*$. The top two singular values of $\widetilde{U}$, for this example, were computed in MATLAB to be 0.0185 and 0.0147 (each with unit multiplicity); thus, by Proposition \ref{Prop-Unique}, $m^*$ is the unique optimal perturbation. Figure \ref{fig-logistic-noise} shows the results for the stochastically perturbed logistic map; for this example we compute $\|u^*_1\|_{L^2}^2 \approx 0.6815$.
     \begin{figure}[h]
\subfloat[Subfigure 1 list of figures text][Colourmap of the stochastically perturbed logistic map. The colourbar indicates the values of the elements of the matrix.]{
\includegraphics[width=0.5\textwidth]{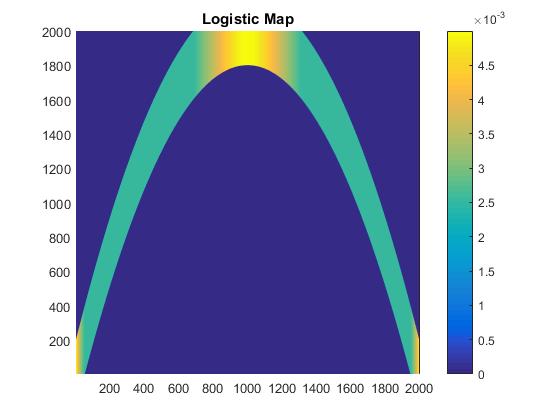}
\label{fig:subfig7}}
\subfloat[Subfigure 2 list of figures text][The invariant density $h$.]{
\includegraphics[width=0.5\textwidth]{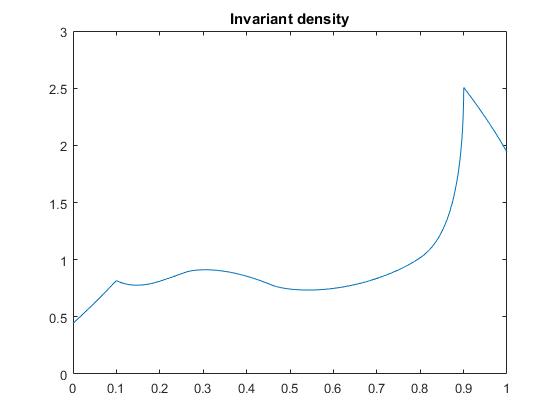}
\label{fig:subfig8}}
\qquad
\subfloat[Subfigure 3 list of figures text][The optimal perturbation $m^*$. The colourbar indicates the values of the elements of the matrix.]{
\includegraphics[width=0.5\textwidth]{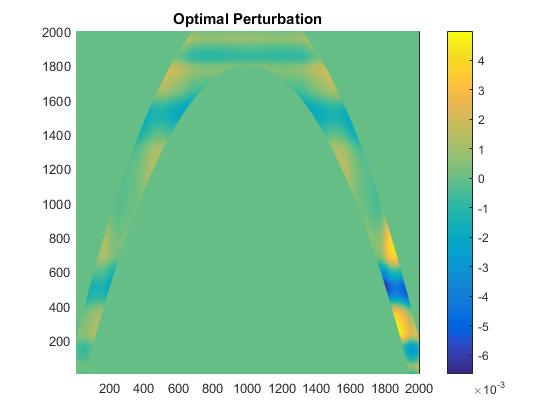}
\label{fig:subfig9}}
\subfloat[Subfigure 4 list of figures text][The optimal linear response $u^*_1$ of the invariant density.]{
\includegraphics[width=0.5\textwidth]{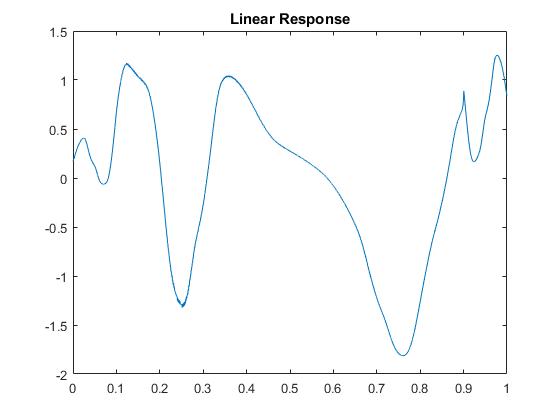}
\label{fig:subfig10}}
\caption{Solution to the problem of maximising the $L^2$-norm of the linear response of the stochastically perturbed logistic map.}
\label{fig-logistic-noise}
\end{figure}
In Figure \ref{fig-logistic-noise}(C), we see sharp increases in mass mapped to neighbourhoods of $x=0.15$ and $x=0.4$, as well as a sharp decrease in mass mapped to a neighbourhood of $x=0.25$;  these observations coincide with the local peaks and troughs of the response vector shown in Figure \ref{fig-logistic-noise}(D).
%we see that the optimal perturbation for the logistic map perturbs the probability of transitioning from $(0.9,1)$ to $(0.1,0.4)$ the most; this might be the case since the values of $h$ at $(0.9,1)$ are very high compared to anywhere else on the interval and so perturbing the map as such will produce the greatest change in the response of the invariant density.
Table \ref{table-Logistic} displays the corresponding numerical results.

\begin{table}
\begin{tabular}{|c|c||c|c|c|c|c|}
\hline
$n$ &$ \|u^*_1\|_{L^2}^2$& $\varepsilon$ & $\|h_{M+\varepsilon m^*}-(h_M+\varepsilon u_1^*)\|_{L^2}^2$ & $\|h_{M-\varepsilon m^*}\|^2_{L^2}$ & $\|h_M\|^2_{L^2}$ & $\|h_{M+\varepsilon m^*}\|^2_{L^2}$ \\
\hline
1500 & 0.6849 & 1/100 & 7.8493$\times 10^{-10}$ & 1.215630946 & 1.217112326 & 1.218720741 \\
\hline
 & & 1/1000 & 7.8670$\times 10^{-14}$ & 1.216958459 & 1.217112326 & 1.217267464 \\
\hline
1750 & 0.6829 & 1/100 & 7.8297$\times 10^{-10}$ & 1.215635225 & 1.217113142 & 1.218717705 \\
\hline
 & & 1/1000 & 7.8474$\times 10^{-14}$ & 1.216959639 & 1.217113142 & 1.217267913 \\
\hline
2000 & 0.6815 & 1/100 & 7.8099$\times 10^{-10}$ & 1.215637697 & 1.217113684 & 1.218716031 \\
\hline
 & & 1/1000 & 7.8276$\times 10^{-14}$ & 1.216960386 & 1.217113684 & 1.217268245 \\
\hline
\end{tabular}
\caption{Numerical results for maximising the linear response of the $L^2$-norm of the invariant probability measure of the stochastic logistic map.
Column 1: number of Ulam partitions;  Column 2:  optimal objective value;  Column 3: Values of $\varepsilon$;  Column 4:  calculation of linearization error;  Columns 5-7:  demonstration that the $L^2$-norm of the invariant density increases and decreases appropriately under the small perturbation $\varepsilon m^*$.}
\label{table-Logistic}
\end{table}

\subsubsection{Maximising the linear response of the expectation of an observable}
Using (\ref{obj_lin_fun})--(\ref{zero_lin_fun}), we calculate the perturbation achieving a maximal linear response of $\langle c, h\rangle$ for $c(x)=2\sin(\pi x)$ for the stochastic dynamics (\ref{Logistic_Map_Noise}).
We again compute with the vector $\mathbf{c}=(c_1,\dots,c_n)\in\R^n$, where $c_i = \frac{\sqrt{n}}{\|\tilde{\textbf{c}}\|_2}\tilde{c}_i$, $\tilde{c}_i = 2\sin (\pi x_i)$ and $x_i= \frac{i-1}{n}+\frac{1}{2n}$, $i=1,\ldots,n$. We compute the discretization matrix $M\in\R^{n\times n}$ derived from Ulam's method and make use of Algorithm 3.

The $m^*$ provoking the greatest linear response in the expectation $\langle c,h\rangle$ is shown in Figure \ref{fig-logistic-noise-fun} (A).
The linear response corresponding to $m^*$ is shown in Figure \ref{fig-logistic-noise-fun}(B); for this example, $ \langle c, u^*_1\rangle_{L^2}\approx 0.1187$.
The response takes its minimal values at $x=0, x=1$, where the values of the observable $c$ is also least, and the response is broadly positive near the centre of the interval $[0,1]$, where the observable takes on large values;  both of these observations are consistent with maximising the linear response of the observable $c$.

%As in the previous example, the optimal perturbation perturbs the probability of transitioning from $(0.9,1)$ to $(0.1,0.4)$ the most and decreases the probability of transitioning from $(0.4,0.7)$ to $(0.95,1)$; the latter occurs because our observable gives more weight to values near $0.5$ and so decreasing the probability of going to $(0.95,1)$ increases the value of the expectation.

\begin{figure}[h]
\subfloat[Subfigure 1 list of figures text][The optimal perturbation $m^*$. The colourbar indicates the values of the elements of the matrix.]{
\includegraphics[width=0.5\textwidth]{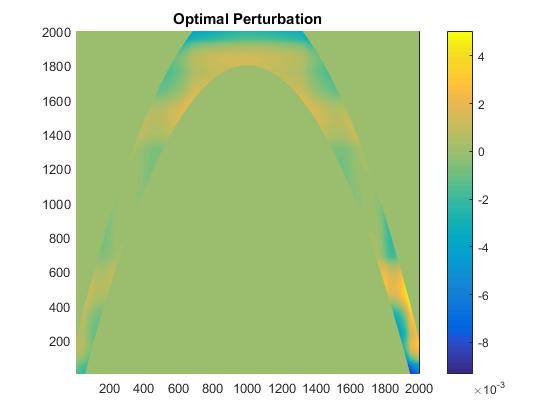}
\label{fig:subfig11}}
\subfloat[Subfigure 2 list of figures text][The optimal linear response $u^*_1$ of the invariant density.]{
\includegraphics[width=0.5\textwidth]{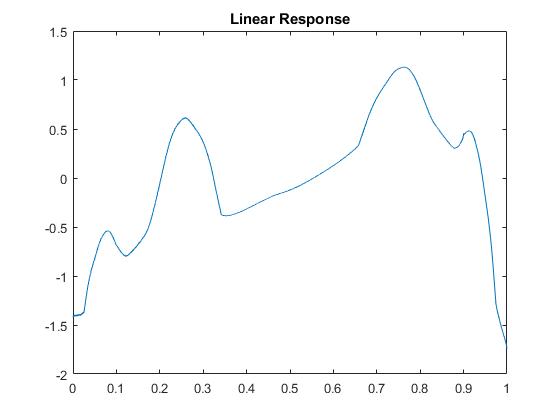}
\label{fig:subfig12}}
\caption{Solution to the problem of maximising the expectation of the response of observable $c(x)$ for the stochastically perturbed logistic map.}
\label{fig-logistic-noise-fun}
\end{figure}

Numerical results for this example are provided in Table \ref{table-Logistic-fun}.

\begin{table}[h]
\hspace*{-4em}\begin{tabular}{|c|c||c|c|c|c|c|}
\hline
$n$ &$ \langle c, u^*_1\rangle_{L^2}$& $\varepsilon$ & $\langle c, h_{M+\varepsilon m^*}\rangle_{L^2}-\langle c,h_M+\varepsilon u^*_1\rangle_{L^2}$ & $\langle c, h_{M-\varepsilon m^*}\rangle_{L^2} $ & $\langle c, h_M\rangle_{L^2}$ & $\langle c, h_{M+\varepsilon m^*}\rangle_{L^2}$ \\
\hline
1500 & 0.1190 & 1/100 & -1.8929$\times 10^{-6}$ & 0.800087366 & 0.801279662 & 0.802468177 \\
\hline
 & & 1/1000 & -1.8903$\times 10^{-8}$ & 0.801160602 & 0.801279662 & 0.801398684 \\
\hline
1750 & 0.1189 & 1/100 & -1.8871$\times 10^{-6}$ & 0.800089179 & 0.801279736 & 0.802466524 \\
\hline
 & & 1/1000 & -1.8845$\times 10^{-8}$ & 0.801160850 & 0.801279736 & 0.801398585 \\
\hline
2000 & 0.1187 & 1/100 & -1.8845$\times 10^{-6}$ & 0.800090673 & 0.801279783 & 0.802465129 \\
\hline
 & & 1/1000 & -1.8819$\times 10^{-8}$ & 0.801161041 & 0.801279783 & 0.801398487 \\
\hline
5000 & 0.1182 & 1/100 & -1.8705$\times 10^{-6}$ & 0.800096427 & 0.801279916 & 0.802459669 \\
\hline
 & & 1/1000 & -1.8679$\times 10^{-8}$ & 0.801161734 & 0.801279916 &  0.801398059 \\
\hline
7000 & 0.1181 & 1/100 & -1.8678$\times 10^{-6}$ & 0.800097497 & 0.801279928 & 0.802458629 \\
\hline
 & & 1/1000 & -1.8652$\times 10^{-8}$ & 0.801161852 & 0.801279928 & 0.801397966 \\
\hline
\end{tabular}
\caption{Numerical results for maximising the linear response of the expectation of $c(x)=2\sin(\pi x)$ for the stochastic logistic map.
Column 1: number of bins;  Column 2:  optimal objective value;  Column 3: values of $\varepsilon$;    Column 4:  calculation of linearization error;  Columns 5-7:  demonstration that the expected value of the function $c$ increases and decreases appropriately under the small perturbation $\varepsilon m^*$.}
\label{table-Logistic-fun}
\end{table}

\subsection{Double Lanford Map}\label{sect:doublelanford}
In this last section, we consider the problem of maximising the linear response of the rate of convergence to the equilibrium. The underlying deterministic dynamics is given by a stochastically perturbed double Lanford map. More explicitly, we consider the map $T:S^1\rightarrow S^1$ defined by
\begin{equation}
T(x) =
\begin{cases}
\left(T_{Lan}(2x)\mod \frac{1}{2}\right)+\xi\mod 1 & \text{ if } 0\leq x\leq \frac{1}{2}\\
\left(T_{Lan}\left(2\left(x-\frac{1}{2}\right)\right)\mod \frac{1}{2}\right)+\frac{1}{2}+\xi\mod 1 & \text{ if } \frac{1}{2}<x\leq 1,
\end{cases}
\end{equation}
where $T_{Lan}(x) = 2x+\frac{1}{2}x(1-x)$ and $\xi\sim\mathcal{U}(0,\frac{1}{10})$.
We have chosen this doubled version of the Lanford map in order to study a relatively slowly (but still exponentially) mixing system.
The subintervals $[0,1/2]$ and $[1/2,1]$ are ``almost-invariant'' because there is only a relatively small probability that points in each of these subintervals are mapped into the complementary subinterval;  see Figure \ref{fig-lanford-double-noise}(A).

\begin{figure}[h]
\subfloat[Subfigure 1 list of figures text][Colourmap of the stochastically perturbed double Lanford map. The colourbar indicates the values of the elements of the matrix.]{
\includegraphics[width=0.5\textwidth]{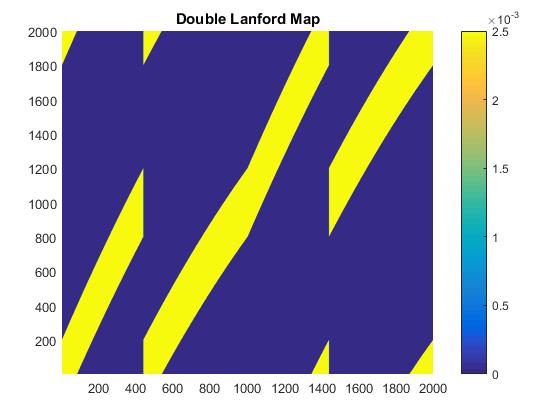}
\label{fig:subfig13}}
\subfloat[Subfigure 2 list of figures text][The invariant density $h$.]{
\includegraphics[width=0.5\textwidth]{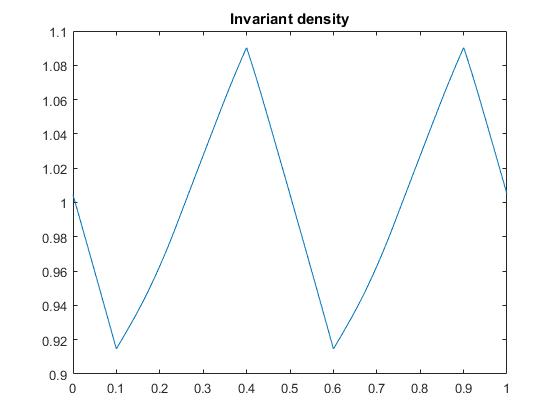}
\label{fig:subfig14}}
\qquad
\subfloat[Subfigure 3 list of figures text][The optimal perturbation $m^*$. The colourbar indicates the values of the elements of the matrix.]{
\includegraphics[width=0.5\textwidth]{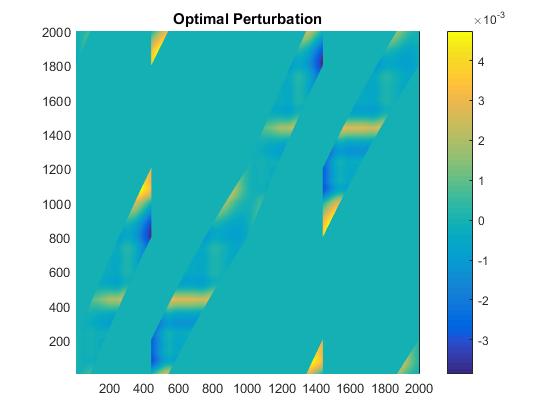}
\label{fig:subfig15}}
\subfloat[Subfigure 4 list of figures text][The optimal linear response $u^*_1$ of the invariant density.]{
\includegraphics[width=0.5\textwidth]{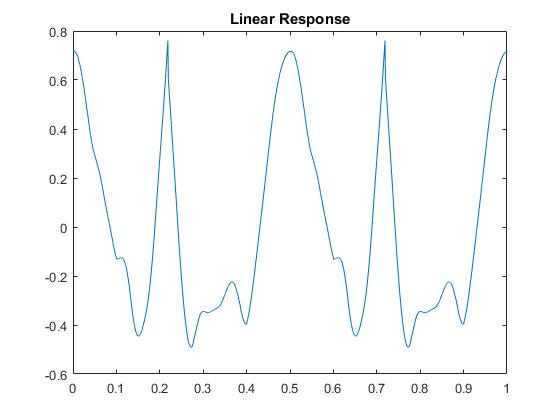}
\label{fig:subfig16}}
\caption{Solution to the problem of maximising the linear response of the rate of convergence to the equilibrium of the stochastically perturbed double Lanford map.}
\label{fig-lanford-double-noise}
\end{figure}

Let $M\in\R^{n\times n}$ be Ulam's discretization of the transfer operator of the map $T$ with $n$ partitions. Using Algorithm 4, we solve problem (\ref{obj_eval2})-(\ref{zero_lin_fun_eval2}) for the matrix $M$ for $n=2000$. Figure \ref{fig-lanford-double-noise} shows the double Lanford map and the approximation of the invariant density $h$ of this map. Figure \ref{fig-lanford-double-noise}(C) shows the optimal perturbation matrix $m^*$ that maximises the linear response of the rate of convergence to the equilibrium and Figure \ref{fig-lanford-double-noise}(D) shows the corresponding linear response $u_1^*$ of the invariant density $h$. We note that the sign of the matrix $m^*$ is chosen such that the $\nu$ in ~\eqref{70b} is negative.
The optimal objective is given by $\rho = \frac{d(\Re(\log\lambda_2(\varepsilon)))}{d\varepsilon}|_{\varepsilon = 0} \approx -0.2843$.
Figure \ref{fig-lanford-double-noise}(C) shows that most of the large positive values in the perturbation occur in the upper left and lower right blocks of the graph of the double Lanford map, precisely to overcome the almost-invariance of the subintervals $[0,1/2]$ and $[1/2,1]$.
In order to compensate for these increases, there are commensurate negative values in the lower left and upper right.
The net effect is that more mass leaves each of the almost-invariant sets at each iteration of the stochastic dynamics, leading to an increase in mixing rate.
%From figure \ref{fig:subfig15}, we see that the optimal perturbation perturbs the dynamics as to increase the transition probability from $(0.15,0.2)$ to $(0.5,0.55)\cup (0.9,1)$ and from $(0.65,0.7)$ to $(0,0.1)\cup (0.45,0.5)$. Also, the optimal perturbation, to a lesser degree, increases the probability to transition to the regions around $0.2$ and $0.7$. \hl{This is the case because ...}

Table \ref{table-Lanford-double} illustrates the numerical results. The value of $\rho$, namely the estimated derivative of the real part of $\log(\lambda_2)$, minimised over all valid perturbations, is shown in the second column. As $n$ increases, $\rho$ appears to converge to a fixed value. Let $r$ and $l$ denote the left and right eigenfunctions of $\mathcal{P}$ corresponding to the second largest eigenvalue, $\delta\mathcal{P}$ denote the perturbation operator approximated by $m^*$, and $\eta_2= \langle l, \delta\mathcal{P} (r)\rangle_{L^2}$, the analogue of ~\eqref{dlambda} in the continuous setting. In the fourth column, we see that the absolute value of the linearization of the perturbed eigenvalue, $|\lambda_2+\varepsilon \eta_2|$, is close to the absolute value of the optimally perturbed eigenvalue, $|\lambda_2(\varepsilon)^*|$. Finally, to verify the parity of $m^*$ is correct, in Table 5 we observe that the absolute value of the second eigenvalue increases when we perturb in the direction $-\varepsilon m^*$ and decreases as we perturb in the direction $\varepsilon m^*$, as required for the perturbation to increase the mixing rate.

\begin{table}[h]
\hspace*{-4em}\begin{tabular}{|c|c||c|c|c|c|c|}
\hline
$n$ &$\rho$ & $\varepsilon$ & $|\lambda_2(\varepsilon)^*|-|\lambda_2+\varepsilon \eta_2|$ & $|\lambda_2(-\varepsilon)^*|$ & $|\lambda_2|$ & $|\lambda_2(\varepsilon)^*|$ \\
\hline
1500 & -0.2852 & 1/100 & -4.2129$\times 10^{-5}$ & 0.849558095 & 0.847154908 &0.844725328  \\
\hline
 & & 1/1000 & -4.4851$\times 10^{-7}$ & 0.847396407 & 0.847154908 & 0.846913145 \\
\hline
1750 & -0.2846 & 1/100 & -4.1719$\times 10^{-5}$ & 0.849553120 & 0.847155348&0.844731281  \\
\hline
 & & 1/1000 & -4.6674$\times 10^{-7}$ & 0.847396301 & 0.847155348 &0.846914132  \\
\hline
2000 & -0.2843 & 1/100 & -4.2606$\times 10^{-5}$ & 0.849550779 & 0.847155633 &0.844734275  \\
\hline
 & & 1/1000 & -5.5723$\times 10^{-7}$ & 0.847396320 & 0.847155633 & 0.846914684 \\
\hline
5000 & -0.2823 & 1/100 & -3.9567$\times 10^{-5}$ & 0.849535385 & 0.847156392 &0.844751481  \\
\hline
 & & 1/1000 & -4.1491$\times 10^{-7}$ & 0.847395450 & 0.847156392 & 0.846917075 \\
\hline
7000 & -0.2820 & 1/100 & -3.9229$\times 10^{-5}$ & 0.849532569 & 0.847156528 &0.844754619  \\
\hline
 & & 1/1000 & -4.0689$\times 10^{-7}$ & 0.847395289 & 0.847156528  & 0.846917509 \\
\hline
\end{tabular}
\caption{Numerical results for the double Lanford Map. Column 1: number of bins;  Column 2:  optimal objective value;  Column 3: values of $\varepsilon$;  Column 4:  calculation of linearization error;  Columns 5-7:  demonstration that the absolute value of the second eigenvalue increases and decreases appropriately under the small perturbation $\varepsilon m^*$.}
\label{table-Lanford-double}
\end{table}

\section*{Acknowledgements}
The authors thank Guoyin Li for helpful remarks concerning Section 3 and Jeroen Wouters for some literature suggestions.
FA is supported by the Australian Government's Research Training Program. DD is supported by an ARC Discovery Project DP150100017 and received partial support from the Croatian Science Foundation under the grant IP-2014-09-2285. GF is partially supported by DP150100017.

\clearpage
\bibliographystyle{plain}

\end{document}